\newtheorem{theorem}{Theorem}[section]
\newtheorem{lemma}{Lemma}[section]
\newtheorem{proposition}{Proposition}[section]
\newenvironment{proof}[1][Proof]{\noindent\textbf{#1.} }{\ \rule{0.5em}{0.5em}}
\newcounter{ourcount}
\newcounter{myenumi}
\newcommand{\Sn}[2]{\mathfrak{S}_{#1}^{#2}}
\newcommand{\Snfree}[2]{\widehat{\mathfrak{S}}_{#1,#2}}
\newcommand{\w}[2]{\big[#1,#2\big]}
\newcommand{\wpp}[1]{\llbracket #1\rrbracket}
\newcommand{\wB}[2]{\mathsf{B}_{#1,#2}\left(\delta\right)}
\newcommand{\wBfree}[2]{\widehat{\mathsf{B}}_{#1,#2}}
\newcommand{\qwB}[2]{\mathsf{qB}_{#1,#2}\left(\delta,q\right)}
\newcommand{\qn}[1]{\left(#1\right)_q}
\newcommand{\Binomial}[2]{\left(\begin{array}{c}
    #1\\
    #2 
\end{array}\right)}
\newcommand{\plambda}{\boldsymbol{\lambda}}
\newcommand{\sfu}{\mathsf{u}}
\newcommand{\sfv}{\mathsf{v}}
\newcommand{\sfw}{\mathsf{w}}
\newcommand{\bec}{\vartriangleleft}
\newcommand{\bbec}{\blacktriangleleft}
\newcommand*{\rom}[1]{\expandafter\@slowromancap\romannumeral #1@}
\begin{document}
$\ $
\vspace{1.cm}
\begin{center}
{\Large \textbf{Normal form for the walled Brauer algebra:\\
construction and applications
}}

{\small
\vspace{.45cm} 
{\large \textbf{D.V. Bulgakova$^{1,2}$, Y.O. Goncharov$^{5,6}$, O.V. Ogievetsky$^{1,2,3,4}$ }}

\vskip .35cm \texttt{dvbulgakova@gmail.com $\qquad$ yegor.goncharov@gmail.com}

\vskip .5cm $^{1}$Aix Marseille Universit\'{e}, Universit\'{e} de
Toulon, CNRS, \\ CPT UMR 7332, 13288, Marseille, France

\vskip .35cm $^{2}$ I.E.Tamm Dept of Theoretical Physics, Lebedev Physical Institute,\\ 
Leninsky prospect 53, 119991, Moscow, Russia

\vskip .35cm $^{3}$ Bogoliubov Laboratory of Theoretical Physics,\\
JINR, 141980 Dubna, Moscow region, Russia

\vskip .35cm $^{4}$ Kazan Federal University, Kremlevskaya 17, Kazan 420008, Russia

\vskip .35cm $^{5}$ Service de Physique de l’Univers, Champs et Gravitation,
Universit\'e de Mons - UMONS,\\ 20 Place du Parc, B-7000 Mons, Belgique

\vskip .35cm $^{6}$ Institut Denis Poisson,
Universit\'e de Tours, Universit\'e d’Orl\'eans, CNRS,\\
Parc de Grandmont, 37200 Tours, France
}

\end{center}

\begin{abstract}\noindent
We construct a normal form for the walled Brauer algebra, together with the reduction algorithm. We apply normal form to calculate the numbers of monomials in generators with minimal length. We further utilize normal form to give explicit expressions for a generating set and annihilator ideal of a particular cyclic vector in a cell module.
\end{abstract}

\section{Introduction}
The first studies of the walled Brauer algebra ${\sf B}_{r,s}(\delta)$, see \cite{T,K,BCHLLS}, were motivated by interest in a version of the Schur-Weyl duality for the group $GL_{\delta}(\mathbb{C})$. If $\delta\in\mathbb{N}$, 
the duality relates mutually commuting actions of ${\sf B}_{r,s}(\delta)$ and $GL_{\delta}(\mathbb{C})$ on the mixed tensor product $V^{\otimes r}\otimes (V^*)^{\otimes s}$ of the natural representation and its dual for $GL_{\delta}(\mathbb{C})$.

\vskip .1cm
The walled Brauer algebra ${\sf B}_{r,s}(\delta)$ is an associative unital $(r+s)!$-dimensional algebra\footnote{According to \cite{NV}: ``The history of the definition of this algebra is as follows. Turaev \cite{T} was the first to define it by a presentation; he also pointed out to the second author that it is $(r+s)!$-dimensional and resembles the group algebra of the symmetric group. The walled Brauer algebra was independently defined in \cite{K}''.} defined for all $\delta\in\mathbb{C}$. It is generated by elements $s_i$, $i=1,\ldots ,r+s-1$, with the following defining relations (see, {\it e.g.}, \cite{BS,JK})
\begin{align}
s_i^2&=1,\quad i\neq r ,\label{rel0}\\
s_r^2&=\delta s_r ,\label{rel1}\\
s_is_{i+1}s_i&=s_{i+1}s_is_{i+1},\quad i,i+1\neq r,\label{rel5}\\
s_i s_j&=s_j s_i \; \text{if}\;  |i-j|>1,\label{rel6}\\
s_r s_{r\pm 1} s_r&= s_r ,\label{rel2}\\
s_r s_{r+1} s_{r-1} s_r s_{r-1}&=s_r s_{r+1} s_{r-1} s_r s_{r+1} ,\label{rel3}\\
\ s_{r-1} s_r s_{r+1} s_{r-1} s_r&=s_{r+1} s_r s_{r+1} s_{r-1} s_r .\label{rel4}
\end{align}
Note that the elements $s_i$ with $1\leqslant i<r$ (respectively, $r<i<r+s$) generate the symmetric group algebra $\mathbb{C}\mathfrak{S}_r$ (respectively, $\mathbb{C}\Sn{s}{}$). The algebra $\wB{r}{s}$ contains $\wB{r}{0}\cong \mathbb{C}\Sn{r}{}$ and $\wB{0}{s}\cong \mathbb{C}\Sn{s}{}$ as commuting subalgebras, together generating $\mathbb{C}\left[\Sn{r}{}\times\Sn{s}{}\right]$.

\vskip .1cm
In the present paper we construct a normal form $\mathfrak{B}_{r,s}$ for $\wB{r}{s}$ -- a set of basis monomials (words) in generators $s_i$. 
To construct the set $\mathfrak{B}_{r,s}$ we introduce an `ordered' modification of the so-called Bergman's diamond lemma \cite{B}, namely, we present a set of rules which, being applied in a {\it certain} order, allows to reduce any monomial in generators to an element from $\mathfrak{B}_{r,s}$. 

\vskip .1cm
The normal form $\mathfrak{B}_{r,s}$ possesses a number of useful properties. The normal form has the following factorized form
\begin{equation}\label{eq:normf_intro}
    \mathfrak{B}_{r,s} = \bigcup_{f = 0}^{\min(r,s)} \mathfrak{B}_{r,s}^{(f)},\;\text{where}\quad \mathfrak{B}_{r,s}^{(f)}=\Sn{r}{L}\mathfrak{D}_{r,s}^{(f)}\Sn{s}{R}.
\end{equation}
Here $\Sn{r}{L}$ and $\Sn{s}{R}$ are normal forms for the symmetric groups, while the set $\mathfrak{D}_{r,s}^{(f)}$ contains $f$ times the noninvertible generator $s_r$, see details in Section \ref{Normf_Alg}. 

\vskip .1cm
The normal form \eqref{eq:normf_intro} agrees with the natural embeddings $\mathsf{A}_{p,q}\subset\wB{r}{s}$, 
where $\mathsf{A}_{p,q}$ is generated by the subset $s_{p+1},\ldots,s_{r+s -1-q}$ of generators. Each $\mathsf{A}_{p,q}$ itself is a walled Brauer algebra and our normal form for the $\mathsf{A}_{p,q}$ is obtained by leaving in $\mathfrak{B}_{r,s}$ only monomials built from generators of a corresponding subset. 
Note that the particular chain $\wB{1}{0}\subset \wB{2}{0} \subset\ldots\subset \wB{r}{0}\subset \wB{r}{1}\subset\ldots \subset \wB{r}{s}$ of the above embeddings is important in the representation theory of the walled Brauer algebra
\cite{CVDM,JK,BO}. 
 
\vskip .1cm 
Factorized structure of \eqref{eq:normf_intro} will be utilized to find the generating function for the numbers
$\nu_{\ell}$ of independent words with a given minimal length $\ell$ (Lemma \ref{wordswB}):
\begin{equation}
    F_{r,s}\left(q\right) = \sum_{\ell} \nu_{\ell}\,q^{\ell} = \qn{r + s}!,
\end{equation}
where $\qn{n} := 1 + q +\ldots + q^{n-1}$ and $\qn{n}! := \qn{1}\cdot \ldots\cdot\qn{n}$. Remarkably, the result coincides with the analogous generating function for the symmetric group
 $\Sn{r+s}{}$.

\vskip .1cm
Representation theory of the walled Brauer algebra is well understood. In \cite{CVDM} it was shown that cell modules arising from a certain cellular algebra structure on ${\sf B}_{r,s}(\delta)$ are labeled by pairs of Young diagrams $\plambda_f=(\lambda_f^L,\lambda_f^R)$ such that $r-|\lambda^L_f| = s-|\lambda^R_f| = f$. The basis of the module $C_{r,s}(\plambda_f)$ was given in terms of the so-called partial one-row diagrams, see Section \ref{Modules_1}. The semisimplicity criterion for $\wB{r}{s}$ was given in \cite{CVDM}. In this paper we always assume that $\delta$ is generic, that is, the walled Brauer algebra is semisimple.

\vskip .1cm
In the present paper we describe the cell modules in terms of left ideals in $\wB{r}{s}$. Namely we calculate the annihilator ideal of a particular vector $v_f\in C_{r,s}(\plambda_f)$ in a module. We construct a basis of the annihilator ideal of a vector $v_f$ using the normal form \eqref{eq:normf_intro} and give a constructive proof of the following Theorem. 

\begin{theorem}\label{decomp}
Fix a $\wB{r}{s}$-module $C_{r,s}(\plambda_f)$. Let $A_f$ and $X_f$ be the sets given in lemmas \ref{genset} and \ref{annihilator}. Then the union $A_f\cup X_f$ is a basis of 
the algebra $\wB{r}{s}$.\end{theorem}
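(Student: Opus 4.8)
The plan is to obtain the theorem as the assembly of the two preceding lemmas through the short exact sequence attached to the cyclic vector $v_f$. Since $v_f$ generates $C_{r,s}(\plambda_f)$, the evaluation map $\pi:\wB{r}{s}\to C_{r,s}(\plambda_f)$, $a\mapsto a\,v_f$, is a surjective homomorphism of left $\wB{r}{s}$-modules whose kernel is precisely the annihilator ideal $\mathrm{Ann}(v_f)=\{a\in\wB{r}{s} : a\,v_f=0\}$. This yields the short exact sequence $0\to\mathrm{Ann}(v_f)\to\wB{r}{s}\xrightarrow{\pi}C_{r,s}(\plambda_f)\to 0$, and hence the dimension identity $\dim\wB{r}{s}=\dim\mathrm{Ann}(v_f)+\dim C_{r,s}(\plambda_f)$. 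I would then invoke Lemma \ref{annihilator} to identify $X_f$ as a basis of the kernel $\mathrm{Ann}(v_f)$, and Lemma \ref{genset} to guarantee that $\{a\,v_f : a\in A_f\}$ is a basis of $C_{r,s}(\plambda_f)$; in particular $\pi$ restricts to a bijection from $A_f$ onto this basis, so $|A_f|=\dim C_{r,s}(\plambda_f)$ and $|X_f|=\dim\mathrm{Ann}(v_f)$. Since every element of $X_f$ annihilates $v_f$ whereas every element of $A_f$ has nonzero image under $\pi$, the two sets are disjoint, $A_f\cap X_f=\oh$, and therefore $|A_f\cup X_f|=|A_f|+|X_f|=\dim\wB{r}{s}=(r+s)!$.

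It remains to verify linear independence, after which the cardinality count forces $A_f\cup X_f$ to be a basis. Suppose $\sum_{a\in A_f}c_a\,a+\sum_{x\in X_f}d_x\,x=0$ in $\wB{r}{s}$. Applying $\pi$ annihilates the second sum, leaving $\sum_{a\in A_f}c_a\,(a\,v_f)=0$; as $\{a\,v_f\}_{a\in A_f}$ is a basis of $C_{r,s}(\plambda_f)$, all $c_a$ vanish. The relation collapses to $\sum_{x\in X_f}d_x\,x=0$, and since $X_f$ is a basis of $\mathrm{Ann}(v_f)$ all $d_x$ vanish as well. Thus $A_f\cup X_f$ is a linearly independent set of cardinality $\dim\wB{r}{s}$, hence a basis of the algebra.

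The genuine content of the argument therefore resides in the two lemmas rather than in this final splicing: Lemma \ref{annihilator} must show that the explicitly constructed $X_f$ exhausts the annihilator, and Lemma \ref{genset} must show that $A_f$ delivers a non-redundant basis of the module through $v_f$. Accordingly, the main obstacle I anticipate is not the linear independence above — which reduces to the one-line application of $\pi$ — but the cardinality bookkeeping underlying it, namely verifying directly from the normal-form descriptions that $|A_f|=\dim C_{r,s}(\plambda_f)$ and $|X_f|=(r+s)!-\dim C_{r,s}(\plambda_f)$, so that the two explicit sets fit together exactly to fill out $\mathfrak{B}_{r,s}$.
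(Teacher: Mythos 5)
Your argument is correct and is essentially the paper's own: the paper simply asserts that Lemmas \ref{genset} and \ref{annihilator} ``together provide a constructive proof'' of Theorem \ref{decomp}, leaving implicit exactly the assembly you spell out (the evaluation map $a\mapsto a\,v_f$, the dimension count $\dim\wB{r}{s}=\#A_f+\#X_f$, and the two-step linear-independence argument). The only discrepancy is notational: in the paper $X_f$ is the set from Lemma \ref{genset} (so $X_f v_f$ is the basis of $C_{r,s}(\plambda_f)$) and $A_f$ is the basis of the annihilator from Lemma \ref{annihilator}, so the roles of $A_f$ and $X_f$ in your write-up are interchanged relative to the paper's definitions, which changes nothing mathematically.
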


It appears that the normal form \eqref{eq:normf_intro} is also appropriate for the $q$-deformed algebra  
$\qwB{r}{s}$. In \cite{KM} the basis of $\qwB{r}{s}$ analogous to \eqref{eq:normf_intro} was introduced for a specific value of $\delta$ and generalized to all values in \cite{H}. 

\vskip .3cm
The paper is organized as follows. In Section \ref{Normf_Alg} we recall the definition of the walled Brauer algebra and construct the normal form $\mathfrak{B}_{r,s}$. In Section \ref{Modules} we recall the construction of the basis of the module in terms of the partial one-row diagrams. There we also construct the annihilator ideal for a given module, thus giving a constructive proof for the Theorem \ref{decomp}. Some explicit calculations and details are postponed to the Appendix.

\section{Normal form for the walled Brauer algebra}\label{Normf_Alg}

Monomials in generators $s_i$, $1\leqslant i<r+s$, whose lengths cannot be reduced by any composition of relations \eqref{rel0}-\eqref{rel4} 
will be referred to as minimal words. It may happen that an element of the algebra $\wB{r}{s}$ can be represented by several monomials of the same length 
in view of relations \eqref{rel5}, \eqref{rel6}, \eqref{rel3}, \eqref{rel4} which do not affect monomial lengths.

\vskip .1cm
In this Section we shall consider bases of the algebra $\wB{r}{s}$ consisting of elements which can be represented by minimal words.
By a normal form for the algebra $\wB{r}{s}$ we mean a basis of $\wB{r}{s}$ and a unique choice of a word representing each basis element.

\vskip .1cm
To construct a normal form we make use of Bergman's diamond lemma \cite{B}. Let $\wBfree{r}{s}$ denote the monoid freely generated by 
elements $\hat{s}_i$, $1\leqslant i<r+s$.  Let also $\wBfree{r}{s}\langle\hat\delta\rangle$ denote the monoid freely generated by elements $\hat{s}_i$, $1\leqslant i<r+s$, and a central element $\hat\delta$. For a subset $\mathfrak{E}$ of $\wBfree{r}{s}$ we denote by 
$\mathfrak{E}\langle\hat\delta\rangle$ the subset of $\wBfree{r}{s}\langle\hat\delta\rangle$ consisting of words $\hat\delta^j \mathsf{e}\,$ for $\mathsf{e}\in\mathfrak{E}$ and
$j=0,1,2,\dots$

\vskip .1cm
We propose a reduction system $\mathfrak{R}$, a set of words $\mathfrak{B}_{r,s}\subset \wBfree{r}{s}$ and an 
algorithm $\varphi_{\mathfrak{R}}:\wBfree{r}{s}\langle\hat\delta\rangle\to 
\mathfrak{B}_{r,s}\langle\hat\delta\rangle$ transforming any given monomial to a particular reduced form. We show that the image of $\mathfrak{B}_{r,s}$ under the natural map 
\begin{equation}\label{namap}
\mathbb{C}\wBfree{r}{s}\langle\hat\delta\rangle\to \wB{r}{s}\ ,\ \hat{s}_i\mapsto s_i,\hat\delta\mapsto\delta\ ,\end{equation} 
forms a basis of the algebra $\wB{r}{s}$. 

\vskip .1cm
Reduction system $\mathfrak{R}$ is constituted by ordered pairs $\rho=(\mathsf{w}_{\rho},\mathsf{w}^{\prime}_{\rho})$ of monomials $\mathsf{w}_{\rho}\in\wBfree{r}{s}$,
$\mathsf{w}_{\rho}\neq 1$, and    
$\mathsf{w}^{\prime}_{\rho}\in \mathfrak{B}_{r,s}\langle\hat\delta\rangle$; such a pair is written as $\mathsf{w}_{\rho}\to \mathsf{w}^{\prime}_{\rho}$ and understood as the 
substitution instruction, or {\it reduction}: the instruction, applied to a word $\mathsf{e}$, chooses   
a subword, equal to the {\it lhs} and replaces it by the {\it rhs}.  
A monomial is called irreducible if no reduction can be applied to it.

\vskip .1cm
Reductions can be subject to ambiguities meaning that more than one instruction from $\mathfrak{R}$ can be applicable to a given monomial. All ambiguities are analyzed in terms of the following two elementary ones \cite{B}. 
If $\sfv_1\sfv_2 = \sfw_{\rho}$ and $\sfv_2\sfv_3 = \sfw_{\tau}$, where $\sfv_1,\sfv_2,\sfv_3\neq 1$, for some $\rho,\tau\in\mathfrak{R}$ one faces an alternative of transforming $\sfv_1\sfv_2\sfv_3$ either into $\sfw^\prime_{\rho}\sfv_3$ or into $\sfv_1\sfw^\prime_{\tau}$. This is called an overlap ambiguity of $\mathfrak{R}$. If $\sfv_2=\sfw_{\rho}$ and $\sfv_1\sfv_2\sfv_3 = \sfw_{\tau}$, where $\sfv_1\neq 1$ or $\sfv_3\neq 1$, one can transform $\sfv_1\sfv_2\sfv_3$ either into $\sfv_1\sfw^\prime_{\rho}\sfv_3$ or into $\sfw^\prime_{\tau}$. This is referred to as inclusion ambiguity. An ambiguity of $\mathfrak{R}$ is said to be resolvable when there exist reductions $\varphi_1,\varphi_2$ such that $\varphi_1(\sfw^\prime_{\rho}\sfv_3) = \varphi_2(\sfv_1\sfw^\prime_{\tau})$ in case of an overlap and $\varphi_1(\sfv_1\sfw^\prime_{\rho}\sfv_2) = \varphi_2(\sfw^\prime_{\tau})$ in case of an inclusion. 

\vskip .1cm
For generators $\hat{s}_i$, $1\leqslant i<r+s$, denote the word $\hat{s}_p \hat{s}_{p-1}\dots \hat{s}_q\in\wBfree{r}{s}$ ($1\leqslant q\leqslant p < r+s$) by $[p,q]$, and set $[q-1,q]=1$ by definition. 
\begin{proposition}\label{rules}
Let $\mathfrak{R}$ be the following reduction system
\begin{align}
         \hat{s}_i^2 &\rightarrow 1,\quad i\neq r,\label{red1}\\
          \hat{s}_j\hat{s}_i &\rightarrow \hat{s}_i\hat{s}_j,\,\,\,j-i>1,\label{red3}\\
        \hat{s}_{i+1}\hat{s}_{i}\dots \hat{s}_{i-j}\hat{s}_{i+1}&\rightarrow \hat{s}_i\hat{s}_{i+1}\hat{s}_i\dots \hat{s}_{i-j},\,\,\,i<r-1,\,0\leqslant j<i,\label{red4}\\
        \hat{s}_i\hat{s}_{i+j}\dots \hat{s}_{i+1}\hat{s}_i&\rightarrow \hat{s}_{i+j}\dots \hat{s}_{i+1}\hat{s}_i\hat{s}_{i+1},\,\,\,i>r,\,1\leqslant j<r+s-i,\label{red5}\\
        \hat{s}_r^2&\rightarrow\hat\delta \hat{s}_r,\label{red2}\\
        \hat{s}_r\hat{s}_{r-1}\dots \hat{s}_{r-i}\hat{s}_r&\rightarrow \hat{s}_{r-2}\dots \hat{s}_{r-i}\hat{s}_r,\,\,\, 1\leqslant i<r,\label{red6}\\
        \hat{s}_r\hat{s}_{r+i}\dots \hat{s}_{r+1}\hat{s}_r&\rightarrow \hat{s}_r\hat{s}_{r+i}\dots \hat{s}_{r+2},\,\,\,1\leqslant i<s,\label{red7}\\
        [r+j,r-i][r+j,r]&\rightarrow \hat{s}_{r-1}[r+j-1,r-i][r+j,r],\,\,\,1\leqslant i<r,\,1\leqslant j<s,\label{red8}\\
        [r,r-i][r+j,r-i]&\rightarrow [r,r-i][r+j,r-i+1]\hat{s}_{r+1},\,\,\,1\leqslant i<r,\,1\leqslant j<s\label{red9}.
\end{align}
Then

\noindent {\rm (i)} All ambiguities of $\mathfrak{R}$ are resolvable.

\noindent {\rm (ii)} The factor-algebra of the monoid algebra $\mathbb{C}\wBfree{r}{s}\langle\hat\delta\rangle$ by the ideal generated by the elements $\mathsf{w}_{\rho}-\mathsf{w}^{\prime}_{\rho}$, for $\rho$ ranging through the set of instructions, and $\hat\delta-\delta$, is isomorphic to the walled Brauer algebra $\wB{r}{s}$.

\end{proposition}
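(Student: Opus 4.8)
The plan is to deduce both statements from Bergman's diamond lemma \cite{B}, for which two inputs are required: a well-founded order on $\wBfree{r}{s}\langle\hat\delta\rangle$ compatible with the reductions (guaranteeing termination, i.e. the descending chain condition), and the resolvability of every ambiguity (which is exactly part (i) and supplies local confluence). Granting these, the lemma identifies the irreducible monomials $\mathfrak{B}_{r,s}\langle\hat\delta\rangle$ as a basis of the factor-algebra $Q$ of $\mathbb{C}\wBfree{r}{s}\langle\hat\delta\rangle$ by the reduction ideal. To obtain (ii) it then remains to match $Q$ with $\wB{r}{s}$: I would check that each rule \eqref{red1}--\eqref{red9}, read with $\hat\delta\mapsto\delta$, is a consequence of the defining relations \eqref{rel0}--\eqref{rel4}, and conversely that \eqref{rel0}--\eqref{rel4} are recovered from the rules; the two sets of relations then generate the same ideal and $Q\cong\wB{r}{s}$.

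For part (i) I would carry out the finite bookkeeping of all overlap and inclusion ambiguities of $\mathfrak{R}$, organised by the region of the wall in which the overlapping letters sit. Overlaps governed solely by the far-commutation rule \eqref{red3} and by the symmetric-group braid rules \eqref{red4} (for $i<r-1$) and \eqref{red5} (for $i>r$) resolve inside $\mathbb{C}\Sn{r}{}$ and $\mathbb{C}\Sn{s}{}$ exactly as for the usual normal form of the symmetric group, and the self-overlaps of the idempotent-type rules \eqref{red1}, \eqref{red2} are immediate. The genuinely new critical pairs are those in which the noninvertible generator $\hat s_r$ meets a braid move, and those produced by the cross-wall rules \eqref{red8}, \eqref{red9}, where one must also invoke the directed forms \eqref{red6}, \eqref{red7} of relations \eqref{rel2}--\eqref{rel4}. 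I expect the overlaps of \eqref{red8} and \eqref{red9} with \eqref{red6}, \eqref{red7} and with one another to be the main obstacle, since resolving them takes several steps and mixes letters on both sides of the wall.

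The remaining ingredient is the order, which is where the `ordered' character of the construction enters: no single naive length- or lexicographic order makes every rule decreasing, because \eqref{red3}, \eqref{red4}, \eqref{red5}, \eqref{red8}, \eqref{red9} all preserve length. I would therefore use a layered (lexicographically combined) monovariant: first the number of occurrences of $\hat s_r$, which strictly drops under \eqref{red2}, \eqref{red6}, \eqref{red7} and is unchanged by the rest; then the total length in the $\hat s_i$, which drops under \eqref{red1}; and finally a combinatorial statistic measuring how far a word is from the factorized shape $\Sn{r}{L}\,\mathfrak{D}_{r,s}^{(f)}\,\Sn{s}{R}$ of the target normal form \eqref{eq:normf_intro}. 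The rules \eqref{red4} sort the left block into $\Sn{r}{L}$ and \eqref{red5} the right block into $\Sn{s}{R}$, while \eqref{red8} and \eqref{red9} transport letters across the wall so as to assemble the middle part $\mathfrak{D}_{r,s}^{(f)}$; one designs the statistic so that each of these length-preserving moves strictly decreases it. Being a tuple of non-negative integers it satisfies the descending chain condition, and one checks its compatibility with left and right multiplication. The delicate point is the net decrease under \eqref{red8} and \eqref{red9}: each simultaneously shortens one bracket, inserts a generator at an end and transfers a letter across the wall, so several contributions to the statistic change at once and the monotonicity must be verified by hand.

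With the order and part (i) in hand, the diamond lemma yields that $\mathfrak{B}_{r,s}\langle\hat\delta\rangle$ is a basis of $Q$; setting $\hat\delta=\delta$ gives $\mathfrak{B}_{r,s}$ as a basis, and the relation-matching of the first paragraph upgrades the natural surjection $Q\twoheadrightarrow\wB{r}{s}$ to an isomorphism, proving (ii). As a consistency check one may compare dimensions, the number of irreducibles being $(r+s)!=\dim\wB{r}{s}$.
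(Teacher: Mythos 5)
Your treatment of parts (i) and (ii) themselves is consistent with the paper's: for (i) the paper likewise performs a finite check of critical pairs, first directing the defining relations \eqref{rel0}--\eqref{rel4} into a system $\mathfrak{R}_0$ and recursively completing it to $\mathfrak{R}$ (after which, contrary to what you anticipate, only overlap ambiguities occur -- there are no inclusion ambiguities to treat); for (ii) the paper simply observes that every instruction of $\mathfrak{R}$ is a consequence of the directed defining relations, so the two ideals coincide. Those portions of your plan are fine.

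The third paragraph of your plan, however, rests on a step that provably fails. You propose to design a well-founded order under which \emph{every} rule of $\mathfrak{R}$, in particular \eqref{red8} and \eqref{red9}, is strictly decreasing, and then to invoke Bergman's theorem to conclude that the irreducible monomials form a basis of the quotient. No such order exists: starting from the word $\hat{s}_r \hat{s}_{r-1}\hat{s}_{r+1}\hat{s}_r \hat{s}_{r-1}\hat{s}_r\hat{s}_{r-1}\hat{s}_{r+1}\hat{s}_r$, one application of \eqref{red9} followed by one application of \eqref{red8} forces a pair of words each of which would have to be strictly smaller than the other, so the rules \eqref{red8} and \eqref{red9} can be alternated indefinitely and the rewriting system $\mathfrak{R}$, applied freely, does not terminate. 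Consequently the diamond lemma cannot be applied to $\mathfrak{R}$ as a whole, and your concluding paragraph (that the irreducibles form a basis of $Q$) does not follow from your argument. The paper circumvents this by splitting $\mathfrak{R}=\mathfrak{R}'\cup\mathfrak{R}''$, where $\mathfrak{R}'$ consists of \eqref{red1}--\eqref{red7} and does admit a compatible order (Appendix A), prescribing by hand the sequence in which \eqref{red8} and \eqref{red9} are applied to words already in $\mathfrak{R}'$-normal form, and then establishing the basis property not via the diamond lemma but by comparing the cardinality $(r+s)!$ of the resulting set of irreducibles with $\dim\wB{r}{s}$. Note finally that the basis statement is not part of Proposition \ref{rules}: the proposition asserts only (i) and (ii), and neither requires a termination order, so your proof would be repaired simply by deleting the order construction and the appeal to the diamond lemma.
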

\begin{proof}
(i) We start by turning defining relations for $\wB{r}{s}$ \eqref{rel0}-\eqref{rel4} into the following set $\mathfrak{R}_0$ of instructions
\begin{align*}
         \hat{s}_i^2 &\rightarrow 1,\quad i\neq r,\\
          \hat{s}_j\hat{s}_i &\rightarrow \hat{s}_i\hat{s}_j,\,\,\,j-i>1,\\
       \hat{s}_{i+1}\hat{s}_{i}\hat{s}_{i+1}&\rightarrow \hat{s}_i\hat{s}_{i+1}\hat{s}_i,\,\,\,i<r-1,\\
        \hat{s}_i\hat{s}_{i+1}\hat{s}_i&\rightarrow \hat{s}_{i+1}\hat{s}_i\hat{s}_{i+1},\,\,\,i>r,\\
        \hat{s}_r^2&\rightarrow\hat\delta \hat{s}_r,\\
        \hat{s}_r\hat{s}_{r\pm 1}\hat{s}_r&\rightarrow \hat{s}_r,\\
        \hat{s}_{r+1}\hat{s}_r\hat{s}_{r-1}\hat{s}_{r+1}\hat{s}_r&\rightarrow \hat{s}_{r-1}\hat{s}_r\hat{s}_{r-1}\hat{s}_{r+1}\hat{s}_r,\\
        \hat{s}_r\hat{s}_{r-1}\hat{s}_{r+1}\hat{s}_r\hat{s}_{r-1}&\rightarrow \hat{s}_r\hat{s}_{r-1}\hat{s}_{r+1}\hat{s}_r\hat{s}_{r+1},
\end{align*}
which is a subset $\mathfrak{R}_0\subset\mathfrak{R}$. It is straightforward to check that the reduction system $\mathfrak{R}_0$ is free from inclusion ambiguities while overlap ambiguities are not resolvable unless one recursively extends $\mathfrak{R}_0$ to the reduction system $\mathfrak{R}$. The latter is subject only to overlap ambiguities as well. Resolvability of these ambiguities can be verified by a successive check considering first  all ambiguities of \eqref{red1} with \eqref{red3}-\eqref{red9} then all ambiguities of \eqref{red3} with \eqref{red4}-\eqref{red9} etc.

The assertion (ii) follows since the instructions from $\mathfrak{R}$ are consequences of the instructions from $\mathfrak{R}_0$. 
\end{proof}

\vskip .1cm
To guarantee that a reduction system $\mathfrak{R}$ leads to a set of irreducible words  in a finite number of steps, 
Theorem 1.2 \cite{B} assumes the existence of a partial order $<$ on the set of free monomials  
such that: i) $\sfw_1<\sfw_2$ implies $\sfu\sfw_1\sfv< \sfu\sfw_2\sfv$ for all $\sfu,\sfv$, ii) $<$ is compatible with $\mathfrak{R}$ in a sense that $\sfw^{\prime}_{\rho}<\sfw_{\rho}$ for each instruction $\sfw_{\rho}\to\sfw^{\prime}_{\rho}$, iii) any chain $\sfv_1> \sfv_2> \ldots$ terminates. For the system $\mathfrak{R}$ such order does not exist. Indeed, assume that it does.  Applying the rule \eqref{red9}  to the monomial $\hat{s}_r \hat{s}_{r-1}\hat{s}_{r+1}\hat{s}_r \hat{s}_{r-1}\hat{s}_r\hat{s}_{r-1}\hat{s}_{r+1}\hat{s}_r$ we arrive at $\hat{s}_r \hat{s}_{r-1}\hat{s}_{r+1}\hat{s}_r \hat{s}_{r+1}\hat{s}_r\hat{s}_{r-1}\hat{s}_{r+1}\hat{s}_r$, so we must have
\begin{equation}
    \hat{s}_r \hat{s}_{r-1}\hat{s}_{r+1}\hat{s}_r \hat{s}_{r+1}\hat{s}_r\hat{s}_{r-1}\hat{s}_{r+1}\hat{s}_r<\hat{s}_r \hat{s}_{r-1}\hat{s}_{r+1}\hat{s}_r \hat{s}_{r-1}\hat{s}_r\hat{s}_{r-1}\hat{s}_{r+1}\hat{s}_r.  
\end{equation}
Then, by applying \eqref{red8} to the result, one gets the opposite relation
\begin{equation}
    \hat{s}_r \hat{s}_{r-1}\hat{s}_{r+1}\hat{s}_r \hat{s}_{r-1}\hat{s}_r\hat{s}_{r-1}\hat{s}_{r+1}\hat{s}_r<\hat{s}_r \hat{s}_{r-1}\hat{s}_{r+1}\hat{s}_r \hat{s}_{r+1}\hat{s}_r\hat{s}_{r-1}\hat{s}_{r+1}\hat{s}_r,    
\end{equation}
which is a contradiction. 
This example shows that some sequences of instructions from $\mathfrak{R}$ do not terminate so the reduction system $\mathfrak{R}$, directly understood, does not lead to a normal form. We shall not investigate the question about the existence of another reduction system compatible with a certain order. 
Instead, we will present a trick allowing to construct a well-defined algorithm $\varphi_{\mathfrak{R}}$ which uses precisely the reduction system $\mathfrak{R}$. 
Namely we will specify the order of applying the rules from $\mathfrak{R}$. 

\vskip .1cm
For that purpose we split $\mathfrak{R} = \mathfrak{R}^\prime\cup \mathfrak{R}^{\prime\prime}$, with $\mathfrak{R}^\prime$ constituted by instructions \eqref{red1}-\eqref{red7} and $\mathfrak{R}^{\prime\prime}$ -- by \eqref{red8}, \eqref{red9}. For the set $\mathfrak{R}^\prime$ a partial order $\vartriangleleft$, satisfying conditions i)-iii), on $\wBfree{r}{s}$ 
does exist; it is described in 
Appendix A. Therefore, the reduction system $\mathfrak{R}^{\prime}$ leads to a set of irreducible words $\mathfrak{B}_{r,s}^{\star}$ and gives a well-defined algorithm 
$\varphi_{\mathfrak{R}^\prime}:\wBfree{r}{s}\langle\hat\delta\rangle
\to \mathfrak{B}_{r,s}^{\star}\langle\hat\delta\rangle$.
To describe the words from $\mathfrak{B}_{r,s}^\star$, we first note that instructions \eqref{red1}-\eqref{red5} move generators of $\Sn{r}{}$ (respectively, $\Sn{s}{}$) to the left (respectively, to the right) and arrange them into a certain normal form 
(it is of no importance at the moment and will be specified later). With this in hand, it is a straightforward exercise to check that $\mathfrak{B}_{r,s}^\star$ is constituted by monomials of the form
\begin{equation}\label{eq:Bstar}
    \sfw^{i_1,\dots,i_f}_{j_1,\dots,j_f}=\sfw_L\w{r+i_1}{r-j_1}\w{r+i_2}{r-j_2}\ldots \w{r+i_f}{r-j_f}\sfw_R   
\end{equation}
where $\sfw_{L}\in\Sn{r}{}$ and $\sfw_{R}\in\Sn{s}{}$ 
are in a normal form, $r>i_1\geqslant 0$, $s>j_f\geqslant 0$, $s>i_2,\ldots,i_f\geqslant 1$, $r>j_1,\ldots,j_{f-1}\geqslant 1$ and $0\leqslant f\leqslant\min(r,s)$. 

\vskip .1cm
Clearly, the instructions from $\mathfrak{R}^{\prime\prime}$ do not preserve the set $\mathfrak{B}_{r,s}^\star$.
We specify the algorithm $\varphi_{\mathfrak{R}\prime\prime}$ of applying the reductions from $\mathfrak{R}^{\prime\prime}$ to the monomials $\sfw\in\mathfrak{B}_{r,s}^\star$.
Assume, for a monomial $\sfw$ of the form \eqref{eq:Bstar} with $f\geqslant 2$, that the set $\{ j_1,\dots,j_f\}$ is not strictly decreasing. 
Then there exists the maximal value $k = 1\ldots f-1$ such that $j_k\leqslant j_{k+1}$. The word $\sfw$ contains a subword $\w{r}{r-j_k}\w{r+i_{k+1}}{r-j_k}$ and we apply the instruction \eqref{red9} to it, obtaining $\w{r}{r-j_k}\w{r+i_{k+1}}{r-j_k+1}\hat{s}_{r+1}$. Reductions of this kind (call them $\psi$) break the structure \eqref{eq:Bstar}, $\psi(\sfw)\notin\mathfrak{B}_{r,s}^\star$. It is straightforward to check that $\sfw$ and $\varphi_{\mathfrak{R}^\prime}\circ \psi(\sfw)\in\mathfrak{B}_{r,s}^\star$ have the same $f$, while,  for the word $\varphi_{\mathfrak{R}^\prime}\circ \psi(\sfw)$, the maximal $k^\prime = 1\ldots f-1$ such that $j_{k^\prime}<j_{k^\prime+1}$ (if exists) is less than $k$. 
Iterating this procedure, we arrive at the word which has the form \eqref{eq:Bstar} with $j_1> j_2>\ldots>j_f\geqslant 0$ and thus is irreducible with respect to the union of
$\mathfrak{R}^\prime$ and \eqref{red9}. 
As soon as the ordering in $j$'s is achieved, we start to apply, in a similar way, the instruction \eqref{red8} to arrive at the ordering
$0\leqslant i_1< i_2<\ldots<i_f$ of $i$'s (now we look for the minimal $l=1\ldots f-1$ such that $i_{l-1}\geq i_l$).

\vskip .1cm
Our final algorithm $\varphi_{\mathfrak{R}}$ is the composition of the algorithm $\varphi_{\mathfrak{R}\prime\prime}$ and the algorithm $\varphi_{\mathfrak{R}\prime}$,  
$\varphi_{\mathfrak{R}}=\varphi_{\mathfrak{R}\prime\prime}\circ\varphi_{\mathfrak{R}\prime}$.

\vskip .1cm
We have established the following Proposition.
\begin{proposition} The set $\mathfrak{B}_{r,s}$ of irreducible words with respect to the algorithm $\varphi_{\mathfrak{R}}$ consists of the monomials $ \sfw^{i_1,\dots,i_f}_{j_1,\dots,j_f}$ 
of the form (\ref{eq:Bstar}) with $0\leqslant i_1< i_2<\ldots<i_f<r$ and $s>j_1> j_2>\ldots>j_f\geqslant 0$.\end{proposition}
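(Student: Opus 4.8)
The plan is to identify $\mathfrak{B}_{r,s}$, the words fixed by $\varphi_{\mathfrak{R}}=\varphi_{\mathfrak{R}^{\prime\prime}}\circ\varphi_{\mathfrak{R}^{\prime}}$ (equivalently, the outputs of the algorithm), by proving two inclusions and reusing the analysis just given. First I would record that $\varphi_{\mathfrak{R}^{\prime}}$ genuinely terminates --- the order $\vartriangleleft$ of Appendix~A secures conditions i)-iii) of Bergman's lemma --- so that its image is precisely $\mathfrak{B}_{r,s}^{\star}$, the monomials of the form \eqref{eq:Bstar} subject to the weak constraints. It then suffices to follow $\varphi_{\mathfrak{R}^{\prime\prime}}$ on such words. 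In its first phase, as long as the lower indices fail to be strictly decreasing one selects the maximal $k$ with $j_k\leqslant j_{k+1}$, applies the single \eqref{red9}-reduction $\psi$ to the subword $\w{r}{r-j_k}\w{r+i_{k+1}}{r-j_k}$, and re-normalizes by $\varphi_{\mathfrak{R}^{\prime}}$ to return to $\mathfrak{B}_{r,s}^{\star}$; the monovariant noted above --- $f$ is preserved while the maximal offending index strictly drops --- shows that this phase halts with $s>j_1>\cdots>j_f\geqslant 0$.

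The second phase applies \eqref{red8} in the same manner to impose $0\leqslant i_1<\cdots<i_f<r$, and, by the same monovariant, it terminates; the step I expect to be the main obstacle is to show that the two phases do not interfere. Concretely, one must verify that a single \eqref{red8}-reduction followed by the $\varphi_{\mathfrak{R}^{\prime}}$-cleanup lowers one upper index while leaving every lower index $j_1,\dots,j_f$ unchanged, so that the strict descent produced in the first phase survives the second. The reduction \eqref{red8} itself visibly affects only an upper index and emits a leading $\hat{s}_{r-1}$; the substance of the check is that carrying this $\hat{s}_{r-1}$ leftwards into $\sfw_L$ through \eqref{red1}-\eqref{red5} never meets the noninvertible generator $\hat{s}_r$ in a manner that would rewrite a block via \eqref{red6}, and hence never alters a lower index. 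The boundary cases --- blocks of length one, $i_{l-1}=0$, or $j_l=0$ --- must be treated separately, and it is precisely here that the exact shape of the rules in Proposition~\ref{rules} enters.

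For the reverse inclusion I would observe that any monomial of the asserted form is a fixed point of $\varphi_{\mathfrak{R}}$ and therefore lies in $\mathfrak{B}_{r,s}$. Such a word already belongs to $\mathfrak{B}_{r,s}^{\star}$, so $\varphi_{\mathfrak{R}^{\prime}}$ fixes it; and because its lower indices are strictly decreasing and its upper indices strictly increasing, neither phase of $\varphi_{\mathfrak{R}^{\prime\prime}}$ locates an index on which to act, so $\varphi_{\mathfrak{R}^{\prime\prime}}$ fixes it as well. Combining the two inclusions yields the claimed equality.
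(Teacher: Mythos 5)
Your argument follows the paper's own route essentially verbatim: termination of $\varphi_{\mathfrak{R}^{\prime}}$ via the order of Appendix~A landing in $\mathfrak{B}_{r,s}^{\star}$, then the two-phase application of \eqref{red9} and \eqref{red8} with the monovariant (fixed $f$, strictly decreasing position of the offending index), plus the easy observation that the asserted words are fixed points. The non-interference of the two phases, which you rightly flag as the delicate point, is exactly the step the paper also leaves as ``straightforward to check,'' so your proposal matches the paper both in strategy and in level of detail.
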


\begin{lemma}\label{warmup}
The set $\mathfrak{B}_{r,s}$  
contains $(r+s)!$ elements.
\end{lemma}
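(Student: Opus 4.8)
The plan is to count $\mathfrak{B}_{r,s}$ directly from the explicit description furnished by the preceding Proposition, using the factorized form $\mathfrak{B}_{r,s}^{(f)}=\Sn{r}{L}\mathfrak{D}_{r,s}^{(f)}\Sn{s}{R}$, and then to collapse the resulting sum with a Chu--Vandermonde identity. First I would record the three elementary counts. Since $\Sn{r}{L}$ and $\Sn{s}{R}$ are normal forms for the symmetric groups $\Sn{r}{}$ and $\Sn{s}{}$, they contain $r!$ and $s!$ words respectively. The middle factor $\mathfrak{D}_{r,s}^{(f)}$ is parametrized by exponents $0\leqslant i_1<i_2<\ldots<i_f<r$ and $s>j_1>j_2>\ldots>j_f\geqslant 0$; choosing a strictly increasing tuple $(i_1,\dots,i_f)$ amounts to choosing an $f$-element subset of $\{0,1,\dots,r-1\}$, and likewise the strictly decreasing $(j_1,\dots,j_f)$ is an $f$-element subset of $\{0,1,\dots,s-1\}$, so that $|\mathfrak{D}_{r,s}^{(f)}|=\Binomial{r}{f}\Binomial{s}{f}$.

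The step that actually needs justification is that these three counts multiply, i.e. that the concatenation map $\Sn{r}{L}\times\mathfrak{D}_{r,s}^{(f)}\times\Sn{s}{R}\to\mathfrak{B}_{r,s}^{(f)}$ is a bijection and that the union over $f$ is disjoint. Disjointness across different $f$ is immediate: the factors $\sfw_L\in\Sn{r}{L}$ and $\sfw_R\in\Sn{s}{R}$ involve only the generators $\hat s_1,\dots,\hat s_{r-1}$ and $\hat s_{r+1},\dots,\hat s_{r+s-1}$, while each block $\w{r+i_k}{r-j_k}$ contributes exactly one $\hat s_r$ (as $r-j_k\leqslant r\leqslant r+i_k$ forces $\hat s_r$ to appear once and no block degenerates to $1$). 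Hence the total number of occurrences of $\hat s_r$ in a word of the form \eqref{eq:Bstar} equals $f$, an invariant of the word. For a fixed $f$ one recovers the data by locating the $f$ occurrences of $\hat s_r$: these pin down the block boundaries, from which the exponents $i_k,j_k$ and then the left and right symmetric-group factors are read off uniquely. Thus distinct triples produce distinct reduced words and $|\mathfrak{B}_{r,s}^{(f)}|=r!\,s!\,\Binomial{r}{f}\Binomial{s}{f}$.

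It then remains to sum over $f$ and simplify:
\[
|\mathfrak{B}_{r,s}|=\sum_{f=0}^{\min(r,s)} r!\,s!\,\Binomial{r}{f}\Binomial{s}{f}
= r!\,s!\sum_{f=0}^{\min(r,s)}\Binomial{r}{f}\Binomial{s}{f}
= r!\,s!\,\Binomial{r+s}{r}
= (r+s)!\,,
\]
where the third equality is the Chu--Vandermonde identity $\sum_{f}\binom{r}{f}\binom{s}{f}=\sum_f\binom{r}{f}\binom{s}{s-f}=\binom{r+s}{r}$. The only genuinely delicate point is the unique-factorization claim of the second paragraph; once the block boundaries are located via the occurrences of $\hat s_r$, the rest is bookkeeping, and the agreement with $(r+s)!$ serves as a consistency check against the known dimension of $\wB{r}{s}$.
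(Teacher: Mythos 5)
Your proof is correct and follows essentially the same route as the paper: the paper likewise identifies the monomials with a given $f$ with pairs of $f$-element subsets of an $r$-set and an $s$-set, multiplies by $r!\,s!$ for the symmetric-group factors, and collapses the sum via Vandermonde. The only difference is that you spell out the disjointness over $f$ and the unique-factorization step (via counting occurrences of $\hat s_r$), which the paper leaves implicit.
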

\begin{proof}
The set of monomials with a given $f$ is in bijection with the product of the set of subsets of cardinality $f$ in a set of cardinality $r$ by the set of subsets of cardinality $f$ in a set of cardinality $s$, so
\begin{equation}\label{eq:cardB2}
    \# \mathfrak{B}_{r,s} = r!s!\,\sum_{f = 0}^{\min(r,s)}\left(\begin{array}{c}
        s  \\
         f   
     \end{array}\right)
     \left(\begin{array}{c}
        r  \\
        f   
     \end{array}\right)=(r+s)!\ .
\end{equation}
\end{proof}

\begin{lemma}\label{normf}
The image of the set $\mathfrak{B}_{r,s}$ under the map (\ref{namap}) forms a basis in $\wB{r}{s}$.
\end{lemma}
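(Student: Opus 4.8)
The plan is to establish the basis property by combining a spanning argument with a dimension count, thereby sidestepping any direct proof of linear independence (which would be awkward, since $\mathfrak{R}$ admits no order satisfying Bergman's conditions i)–iii), as the non-terminating cycle between \eqref{red8} and \eqref{red9} shows). First I would argue that the image of $\mathfrak{B}_{r,s}$ spans $\wB{r}{s}$. The generators $s_i$ generate $\wB{r}{s}$ as an algebra, so every element of $\wB{r}{s}$ is a $\mathbb{C}$-linear combination of images of free monomials $\mathsf{e}\in\wBfree{r}{s}$. The algorithm $\varphi_{\mathfrak{R}}$ transforms any such word into an element $\hat\delta^{\,j}\mathsf{b}$ with $\mathsf{b}\in\mathfrak{B}_{r,s}$, and by Proposition \ref{rules}(ii) each instruction $\mathsf{w}_\rho\to\mathsf{w}^\prime_\rho$ is a consequence of the defining relations, so the equality $\mathsf{w}_\rho=\mathsf{w}^\prime_\rho$ holds in $\wB{r}{s}$. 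Under the map \eqref{namap}, with $\hat\delta\mapsto\delta$ a scalar, the image of $\mathsf{e}$ therefore equals $\delta^{\,j}$ times the image of $\mathsf{b}$. Hence the image of $\mathfrak{B}_{r,s}$ spans $\wB{r}{s}$ over $\mathbb{C}$.

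Second, I would invoke the two cardinality facts already in place: by Lemma \ref{warmup} the set $\mathfrak{B}_{r,s}$ has exactly $(r+s)!$ elements, while $\wB{r}{s}$ is $(r+s)!$-dimensional. A spanning set whose cardinality is at most the dimension of the space must in fact be a basis; in particular the images of the $(r+s)!$ words of $\mathfrak{B}_{r,s}$ are pairwise distinct and linearly independent. This closes the argument without any appeal to confluence or to uniqueness of the reduced form.

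The step carrying the real weight is the spanning claim, and specifically the assertion that $\varphi_{\mathfrak{R}}$ is genuinely well-defined and terminates with output in $\mathfrak{B}_{r,s}\langle\hat\delta\rangle$ for every input. This is not a formal consequence of resolvability of ambiguities, precisely because $\mathfrak{R}$ has no compatible order. Termination has instead to be read off from the prescribed order of application: $\varphi_{\mathfrak{R}^\prime}$ terminates because $\mathfrak{R}^\prime$ does admit the order $\vartriangleleft$ of Appendix A, producing the words \eqref{eq:Bstar} of $\mathfrak{B}^\star_{r,s}$; and $\varphi_{\mathfrak{R}^{\prime\prime}}$ terminates because each application of \eqref{red9} strictly decreases the controlling index $k$, with the subsequent applications of \eqref{red8} behaving symmetrically, all while $f$ is preserved. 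Granting this, the reduction yields a well-defined element of $\mathfrak{B}_{r,s}\langle\hat\delta\rangle$, which is all that the spanning argument requires; I would merely note that for the basis conclusion one needs termination of the algorithm, not the stronger uniqueness of the normal form.
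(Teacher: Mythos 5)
Your argument is correct, but it runs in the opposite direction to the paper's. The paper's proof asserts that the images of the words of $\mathfrak{B}_{r,s}$ are linearly independent ``by construction'' and then concludes from the cardinality count of Lemma \ref{warmup} together with the known dimension $\dim\wB{r}{s}=(r+s)!$; you instead prove the spanning half --- every monomial in the generators reduces under $\varphi_{\mathfrak{R}}$ to $\delta^{j}$ times an element of $\mathfrak{B}_{r,s}$ because each instruction of $\mathfrak{R}$ is a consequence of the defining relations and $\hat\delta\mapsto\delta$ is a scalar --- and then use the same cardinality count to upgrade the spanning set to a basis. Both routes lean on the externally supplied fact that $\dim\wB{r}{s}=(r+s)!$, so neither is more elementary in that respect. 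What your route buys is that the half you prove directly is the one that genuinely follows from the material already established: spanning needs only that $\varphi_{\mathfrak{R}}$ terminates with output in $\mathfrak{B}_{r,s}\langle\hat\delta\rangle$, which you correctly isolate as the load-bearing step and which is secured by the order $\vartriangleleft$ of Appendix A for $\mathfrak{R}^{\prime}$ and by the strictly decreasing controlling index for $\mathfrak{R}^{\prime\prime}$. By contrast, linear independence of the irreducible words is exactly what Bergman's lemma would normally deliver and cannot here, since no compatible order exists for the full system $\mathfrak{R}$; your proof sidesteps that delicate point entirely and recovers independence as a corollary of the dimension count, which is arguably a cleaner logical arrangement than the paper's.
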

\begin{proof}
By construction, the images of the words from $\mathfrak{B}_{r,s}$ are linearly independent. The assertion follows, since the cardinality of $\mathfrak{B}_{r,s}$ coincides, by Lemma \ref{normf}, with the dimension of 
$\wB{r}{s}$.
\end{proof}

\vskip .1cm
Multiplication in $\wB{r}{s}$ is expressed in terms of the basis monomials $\sfv_1,\sfv_2\in\mathfrak{B}_{r,s}$ as $\varphi_{\mathfrak{R}}(\sfv_1\sfv_2)\in \mathfrak{B}_{r,s}$. Left multiplication by generators $s_i$ $\varphi_{\mathfrak{R}}(s_i\sfv)$, $s_i, \sfv\in\mathfrak{B}_{r,s}$, is presented in Appendix B.

\vskip .1cm
Note that the Lemma \ref{normf} holds for any choice of normal forms for $\sfw_{L}\in\Sn{r}{}$ and $\sfw_{R}\in\Sn{s}{}$. The ones we consider in this work are obtained via the reduction system $\mathfrak{R}$. We denote by $\Sn{r}{L}$ the set of words in normal form $[1,1-i_1]\dots[r-1,r-1-i_{r-1}]$ with $-1\leqslant i_1<1,\ldots,-1\leqslant i_{r-1}< r-1$ for the symmetric group $\Sn{r}{}$, and by $\Sn{s}{R}$ the set of words in normal form $[r+1+j_{s-1},r+1]\dots[r+s-1+j_{1},r+s-1]$ with $-1\leqslant j_1<1,\ldots,-1\leqslant j_{s-1}<s-1$ for $\Sn{s}{}$. 

\vskip 0.2cm
We denote by $\mathfrak{D}_{r,s}^{(f)}$ be the set of words 
\begin{equation}\label{eq:Crs}
\w{r+i_1}{r-j_1}\w{r+i_2}{r-j_2}\ldots \w{r+i_f}{r-j_f}
\end{equation}
with $0\leqslant i_1< i_2<\ldots<i_f<r$ and $s>j_1> j_2>\ldots>j_f\geqslant 0$. We set $\mathfrak{D}_{r,s}^{(0)}=\{1\}$. 

\vskip .1cm
In this notation the set $\mathfrak{B}_{r,s}$ decomposes as
\begin{equation}\label{eq:normf}
    \mathfrak{B}_{r,s} = \bigcup_{f = 0}^{\min(r,s)} \mathfrak{B}_{r,s}^{(f)},\;\text{where}\quad \mathfrak{B}_{r,s}^{(f)}=\Sn{r}{L}\mathfrak{D}_{r,s}^{(f)}\Sn{s}{R}.
\end{equation}

Let $\nu_{\ell}$ be the number of words of length $\ell$ in $\mathfrak{B}_{r,s}$ and $F_{r,s}\left(q\right) = \sum_{\ell} \nu_{\ell}\,q^{\ell}$ the corresponding generating function.
\begin{lemma}\label{wordswB} We have
\begin{equation}
    F_{r,s}\left(q\right) = \qn{r + s}!\ ,
\end{equation}
where $(m)_q:=1+q+q^2+\dots+q^{m-1}$ denotes the quantum number $m$, and $\qn{m}! := \qn{1}\cdot \ldots\cdot\qn{m}$.
\end{lemma}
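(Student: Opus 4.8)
The plan is to exploit the factorized decomposition \eqref{eq:normf} of $\mathfrak{B}_{r,s}$ together with the fact that word length (the number of letters of a free word) is additive under concatenation. First I would observe that every element of $\mathfrak{B}_{r,s}^{(f)}=\Sn{r}{L}\mathfrak{D}_{r,s}^{(f)}\Sn{s}{R}$ is by construction a concatenation $\sfw_L\, d\, \sfw_R$ with $\sfw_L\in\Sn{r}{L}$, $d\in\mathfrak{D}_{r,s}^{(f)}$, $\sfw_R\in\Sn{s}{R}$, and that distinct triples yield distinct words, since these words form a basis by Lemma \ref{normf}. As $\ell(\sfw_L\,d\,\sfw_R)=\ell(\sfw_L)+\ell(d)+\ell(\sfw_R)$, the generating function factorizes (the index $f$ affecting only the middle factor):
\begin{equation}
F_{r,s}(q)=\Big(\sum_{\sfw_L\in\Sn{r}{L}}q^{\ell(\sfw_L)}\Big)\Big(\sum_{\sfw_R\in\Sn{s}{R}}q^{\ell(\sfw_R)}\Big)\sum_{f=0}^{\min(r,s)}\Big(\sum_{d\in\mathfrak{D}_{r,s}^{(f)}}q^{\ell(d)}\Big).
\end{equation}

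Next I would evaluate the three factors separately. For the symmetric-group pieces, $\Sn{r}{L}$ is a product over $k=1,\dots,r-1$ of blocks $[k,k-i_k]$ with $-1\le i_k<k$, whose lengths run through $0,1,\dots,k$; hence the per-block contribution is $\qn{k+1}$ and their product is $\qn{2}\cdots\qn{r}=\qn{r}!$, and likewise $\sum_{\sfw_R}q^{\ell(\sfw_R)}=\qn{s}!$ (these are just the Poincaré polynomials of $\Sn{r}{}$ and $\Sn{s}{}$). For $\mathfrak{D}_{r,s}^{(f)}$, the block $\w{r+i_m}{r-j_m}$ has length $i_m+j_m+1$, so a word \eqref{eq:Crs} has length $f+\sum_m i_m+\sum_m j_m$. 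Summing $q^{\sum_m i_m}$ over all $f$-subsets $\{i_1<\dots<i_f\}$ of $\{0,\dots,r-1\}$ gives $q^{\binom{f}{2}}\binom{r}{f}_q$ (the generating function of partitions inside an $f\times(r-f)$ box), and symmetrically for the $j$'s, so that
\begin{equation}
\sum_{d\in\mathfrak{D}_{r,s}^{(f)}}q^{\ell(d)}=q^{f}\,q^{\binom{f}{2}}\binom{r}{f}_q\,q^{\binom{f}{2}}\binom{s}{f}_q=q^{f^2}\binom{r}{f}_q\binom{s}{f}_q ,
\end{equation}
using $f+2\binom{f}{2}=f^2$.

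Combining the factors gives $F_{r,s}(q)=\qn{r}!\,\qn{s}!\sum_{f=0}^{\min(r,s)}q^{f^2}\binom{r}{f}_q\binom{s}{f}_q$, so it remains to establish the $q$-identity $\sum_{f}q^{f^2}\binom{r}{f}_q\binom{s}{f}_q=\binom{r+s}{r}_q=\qn{r+s}!/(\qn{r}!\,\qn{s}!)$. I would recognize this as the $q$-Vandermonde (Gauss) identity: specializing $\binom{r+s}{r}_q=\sum_{j}q^{(r-j)^2}\binom{r}{j}_q\binom{s}{r-j}_q$ and substituting $f=r-j$ together with $\binom{r}{r-f}_q=\binom{r}{f}_q$ reproduces exactly the sum above. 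Substituting back yields $F_{r,s}(q)=\qn{r+s}!$. The main obstacle is precisely this last step: everything preceding it is bookkeeping of word lengths, whereas the closing identity is the one genuinely nontrivial input. If a self-contained argument is preferred, one can instead induct on $s$ using the Pascal-type recursion for Gaussian binomials, or exhibit a length-preserving bijection of $\mathfrak{B}_{r,s}$ with $\Sn{r+s}{}$; invoking the known $q$-Vandermonde identity is, however, the most economical route.
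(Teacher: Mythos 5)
Your proposal is correct and follows essentially the same route as the paper: factorize $F_{r,s}(q)=F_r(q)\,\tilde F_{r,s}(q)\,F_s(q)$ using the decomposition \eqref{eq:normf}, compute $\tilde F_{r,s}(q)=\sum_f q^{f^2}\binom{r}{f}_q\binom{s}{f}_q$ from the length $f+\sum_a i_a+\sum_b j_b$ of the words \eqref{eq:Crs}, and close with the $q$-Vandermonde identity to get $\binom{r+s}{r}_q$. Your write-up is in fact somewhat more detailed than the paper's (which cites Theorem 6.1 of [KC] for the middle factor), but the argument is the same.
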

\begin{proof}
The generating function for the numbers of words of given length $F_{r,s}\left(q\right)$ for $\mathfrak{B}_{r,s}$ has the factorized form
\begin{equation}\label{eq:generatingF}
    F_{r,s}\left(q\right) = F_{r}\left(q\right)\tilde{F}_{r,s}\left(q\right) F_{s}\left(q\right),
\end{equation}
where $F_{r}\left(q\right)$ (respectively, $F_{s}\left(q\right)$) are generating functions for $\Sn{r}{L}$ (respectively, $\Sn{s}{R}$), while $\tilde{F}_{r,s}\left(q\right)$ is a generating function for $\bigcup_{f }\mathfrak{D}_{r,s}^{(f)}$. The length of the word (\ref{eq:Crs}) is $f+\sum_a i_a+\sum_b j_b$ so the generating function $\tilde{F}_{r,s}\left(q\right)$ is
easily found using, e.g;, Theorem 6.1 in \cite{KC},
$$\tilde{F}_{r,s}\left(q\right)=\sum_f q^{f^2}\Binomial{r}{f}_q\Binomial{s}{f}_q=\Binomial{r+s}{r}_q$$
by the $q$-Vandermonde identity. Here $\Binomial{a+b}{b}_q:=\frac{(a+b)_q!}{(a)_q!(b)_q!}$ is the $q$-binomial coefficient. The rest follows.
\end{proof}

\vskip .1cm
Abusing notation we will denote by the symbol $\mathfrak{B}_{r,s}$ the image of the set $\mathfrak{B}_{r,s}$ in the algebra $\wB{r}{s}$. As well, we will denote the word 
$s_p s_{p-1}\dots s_q$ by the symbol $[p,q]$.

\section{Modules over $\wB{r}{s}$}\label{Modules}
\subsection{Diagrammatical description of $\wB{r}{s}$}
Aside from the definition of $\wB{r}{s}$ as a factor-algebra of $\wBfree{r}{s}$, there is also a convenient graphical presentation for a basis of $\wB{r}{s}$ in terms of the so-called walled diagrams, which are defined as follows. Let $p^u_{r,s} =  p^u_r\cup p^u_s$ and $p^d_{r,s} = p^d_r\cup p^d_s$ be two sets, each consisting of $r+s$ nodes aligned horizontally on the plane. The nodes in the set $p^d_{r,s}$ are placed under the nodes in the set $p^u_{r,s}$ and a vertical wall separates the first $r$ nodes $p^u_r$ ($p^d_r$) in the upper (lower) row from the last $s$ nodes $p^u_s$ ($p^d_s$). A walled diagram $d$ is a bijection between the set $p^u_{r,s}\cup p^d_{r,s}$ and visualised by placing the edges between the corresponding points in the following way:

\begin{itemize}
    \item[1.] edges connecting nodes between $p^u_{r,s}$ and $p^d_{r,s}$ do not cross the wall (we call them propagating lines),
    \item[2.] edges connecting nodes between $p^u_{r,s}$ and $p^u_{r,s}$ and  between $p^d_{r,s}$ and $p^d_{r,s}$ cross the wall (we call them arcs).
\end{itemize}
Let $\delta$ be a complex parameter. As a vector space, the walled Brauer algebra ${\sf B}_{r,s}(\delta)$ is identified with the $\mathbb{C}$-linear span of the walled diagrams. The product of two basis elements $\mathfrak{d}_2\mathfrak{d}_1$ is obtained by placing $\mathfrak{d}_1$ above $\mathfrak{d}_2$ and identifying the nodes of the top row of $\mathfrak{d}_2$ with the corresponding nodes in the bottom row of $\mathfrak{d}_1$. Let $\ell$ be the number of closed loops so obtained. 
The product $\mathfrak{d}_1\mathfrak{d}_2$ is given by $\delta^{\ell}$ times the resulting diagram with loops omitted. 

\vskip .2cm
The following walled diagrams represent the generators $s_i$ (the vertical dotted line represents the wall): 

\begin{equation}\label{eq:generators}
\begin{array}{l}
\begin{tikzpicture}
\draw (0,0) --(0,1.2);
\draw (1.5,0) --(2,1.2);
\draw (1.5,1.2) --(2,0);
\draw (3.5,0) --(3.5,1.2);
\draw [dotted] (4,-.2) --(4,1.4);
\draw (4.5,0) --(4.5,1.2);
\draw (6,0) --(6,1.2);
\node at (0,0) {\tiny\textbullet};
\node at (0,1.2) {\tiny\textbullet};
\node at (1.5,0) {\tiny\textbullet};
\node at (2,1.2) {\tiny\textbullet};
\node at (1.5,1.2) {\tiny\textbullet};
\node at (2,0) {\tiny\textbullet};
\node at (3.5,0) {\tiny\textbullet};
\node at (3.5,1.2) {\tiny\textbullet};
\node at (4.5,0) {\tiny\textbullet};
\node at (4.5,1.2) {\tiny\textbullet};
\node at (6,0) {\tiny\textbullet};
\node at (6,1.2) {\tiny\textbullet};
\node at (.8,1.2) {$\dots$};
\node at (.8,0) {$\dots$};
\node at (2.8,1.2) {$\dots$};
\node at (2.8,0) {$\dots$};
\node at (5.3,1.2) {$\dots$};
\node at (5.3,0) {$\dots$};
\node at (0,-.4) {$1$};
\node at (1.3,-.4) {$i$};
\node at (2.2,-.42) {$i+1$};
\node at (3.5,-.43) {$r$};
\node at (4.8,-.4) {$r+1$};
\node at (6.3,-.4) {$r+s$};
\node at (6.5,.5) {$,$};
\node at (9,.5) {$\ \ \ 1\leqslant i <r+s,$};
\node at (9.2,0) {$i\neq r$};
\node at (-1.2,.5) {$s_i:=$};
\end{tikzpicture}
\\
\begin{tikzpicture}
\draw (0,0) --(0,1.2);
\draw (1.5,0) --(1.5,1.2);
\draw (2.1,0) .. controls (2.2,0.4) and (2.7,0.4) .. (2.8,0);
\draw [dotted] (2.45,-.2) --(2.45,1.4);
\draw (2.1,1.2) .. controls (2.2,0.8) and (2.7,0.8) .. (2.8,1.2);
\draw (3.4,0) --(3.4,1.2);
\draw (4.9,0) --(4.9,1.2);
\node at (0,0) {\tiny\textbullet};
\node at (0,1.2) {\tiny\textbullet};
\node at (1.5,0) {\tiny\textbullet};
\node at (1.5,1.2) {\tiny\textbullet};
\node at (2.1,0) {\tiny\textbullet};
\node at (2.8,0) {\tiny\textbullet};
\node at (2.1,1.2) {\tiny\textbullet};
\node at (2.8,1.2) {\tiny\textbullet};
\node at (3.4,0) {\tiny\textbullet};
\node at (3.4,1.2) {\tiny\textbullet};
\node at (4.9,0) {\tiny\textbullet};
\node at (4.9,1.2) {\tiny\textbullet};
\node at (.8,1.2) {$\dots$};
\node at (.8,0) {$\dots$};
\node at (4.2,1.2) {$\dots$};
\node at (4.2,0) {$\dots$};
\node at (0,-.4) {$1$};
\node at (2.0,-.42) {$r$};
\node at (2.95,-.40) {$r+1$};
\node at (5.0,-.40) {$r+s$};
\node at (-1.2,.5) {$s_r:=$};
\node at (5.4,0.5) {$.$};
\end{tikzpicture}\\
\end{array}
\end{equation}

\subsection{$\wB{r}{s}$-modules}\label{Modules_1}

Modules over $\wB{r}{s}$, induced from simple modules over $\mathbb{C}\left[\Sn{r}{}\times \Sn{s}{}\right]\subset\wB{r}{s}$, are referred to as cell modules \cite{CVDM}. 

\vskip .1cm
Let $\lambda=(\lambda_1,\lambda_2,\dots)$ be a partition; $\lambda_1,\lambda_2,\dots$ are non-negative integers, $\lambda_1\geqslant\lambda_2\geqslant\dots$. Let $\displaystyle|\lambda|=\sum_{ i\geqslant 1}\lambda_i$. To each partition $\lambda$ we associate its Young diagram -- a left-justified array of rows of boxes containing $\lambda_1$ boxes in the top row, $\lambda_2$ boxes in the second row, etc. A bipartition is a pair of partitions $\plambda=(\lambda^L,\lambda^R)$. We denote by $\Lambda$ the set of all bipartitions. For each integer $0\leqslant f\leqslant \min(r,s)$, we set
\begin{align}\label{eq:Lambda}
    &\!\!\!\! \Lambda_{r,s}(f):=\{ \plambda_f=(\lambda_f^L,\lambda_f^R)\in\Lambda\,:\, \mid r-|\lambda_f^L|=s-|\lambda_f^R|=f\}\ \text{and}\ \Lambda_{r,s}:=\!\!\!\!\!\bigcup_{f=0}^{\min(r,s)}\!\!\Lambda_{r,s}(f).
\end{align}

Simple $\wB{r}{s}$-modules are indexed by elements of the set $\Lambda_{r,s}$ (see \cite{CVDM}). The module indexed by $\plambda_f$ is denoted by $C_{r,s}(\plambda_f)$. Standard tableaux $t^L_f$ (respectively, $t^R_f$) of the shape $\lambda^L_f$ (respectively, $\lambda^R_f$) parameterize basis vectors $\left|t^{L}_f\,\right>$ (respectively, $\left|t^{R}_f\,\right>$) of the Specht module $S(\lambda_f^L)$ (respectively, $S(\lambda_f^R)$) over $\Sn{r-f}{}$ 
(respectively, $\Sn{s-f}{}$).   Choose subsets $l'=\{a'_1,\dots,a'_{f}\}\subset\{1,\dots,r\}$ and
$l=\{a_1,\dots,a_{f}\}\subset\{r+1,\dots,r+s\}$ and an  isomorphism $l^\prime \rightarrow l$ between them. 
There is a basis of the module $C_{r,s}(\plambda_f)$ with the basis vectors  
\begin{equation}\label{eq:vectors}
    \left|l'\rightarrow l,\,t^{L}_f,\,t^{R}_f\,\right>.
\end{equation}
 Vectors \eqref{eq:vectors} admit a graphical presentation in terms of the so-called `partial one-row' diagrams \cite{CVDM}, see Fig. \ref{fig:vect}. 
\begin{figure}[H]
    \begin{equation*}
\ytableausetup{boxsize=1em}
\left|\left(2\to 11, 4\to 8, 6\to 9\right), \begin{ytableau} 1 & 2 \\3 \end{ytableau},\, \begin{ytableau} 1 & 2 \end{ytableau}\,\right\rangle = 
\begin{array}{c}
\begin{tikzpicture}
\draw (0,0) --(0,0.9);
\draw (0.8,0) --(0.4,0.9);
\draw (1.6,0) --(0.8,0.9);
\draw (0.4,0) .. controls (1.2,0.8) and (3.6,0.8) .. (4.4,0);
\draw (2.0,0) .. controls (2.35,0.48) and (3.25,0.48) .. (3.6,0);
\draw (1.2,0) .. controls (1.6,0.5) and (2.8,0.5) .. (3.2,0);
\draw [dotted] (2.4,-.2) --(2.4,1.0);
\draw (2.8,0) --(4.0,0.9);
\draw (4.0,0) --(4.4,0.9);
\node at (0,0) {\tiny\textbullet};
\node at (0,0.9) {\tiny\textbullet};
\node at (0.8,0) {\tiny\textbullet};
\node at (1.6,0) {\tiny\textbullet};
\node at (0.8,0.9) {\tiny\textbullet};
\node at (0.42,0) {\tiny\textbullet};
\node at (0.42,0.9) {\tiny\textbullet};
\node at (2.0,0) {\tiny\textbullet};
\node at (1.2,0) {\tiny\textbullet};
\node at (4.4,0) {\tiny\textbullet};
\node at (4.4,0.9) {\tiny\textbullet};
\node at (3.6,0) {\tiny\textbullet};
\node at (3.2,0) {\tiny\textbullet};
\node at (4.0,0) {\tiny\textbullet};
\node at (4.0,0.9) {\tiny\textbullet};
\node at (2.8,0) {\tiny\textbullet};
\node at (0,-.4) {$1$};
\node at (1,-.4) {$\dots$};
\node at (2.0,-.4) {$6$};
\node at (3.4,-.4) {$\dots$};
\node at (4.4,-.4) {$11$};
\node at (0.4,1.6) {$\begin{ytableau}
1 & 2\\
3 
\end{ytableau} $};
\node at (4.25,1.5) {$\begin{ytableau}
1 & 2
\end{ytableau} $};
\end{tikzpicture}
\end{array}
\end{equation*}
\vskip -0.3cm
    \caption{an example of a vector for ${\sf B}_{6,5}(\delta)$.}
    \label{fig:vect}
\end{figure}
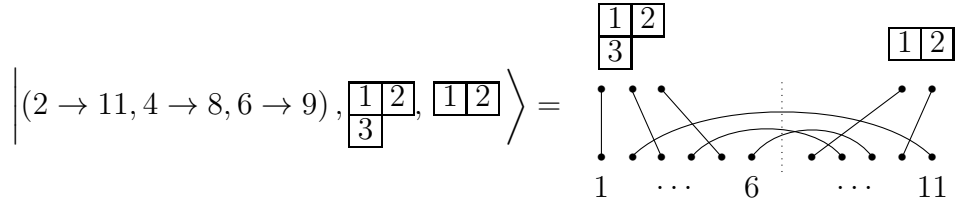

Extending the terminology for the walled diagrams, we call lines starting at tableaux {\it propagating} lines of the partial one-row diagram; other lines will of course be called
arcs.  

\vskip .1cm
We shall define the action of the algebra 
$\wB{r}{s}$ on the vector space $C_{r,s}(\plambda_f)$. To this end, it is sufficient to define the action 
of  a walled diagram $\mathfrak{d}$ from $\wB{r}{s}$ on a partial one-row diagram
$\mathfrak{v}_f$ with $f$ arcs. Place $\mathfrak{d}$ under $\mathfrak{v}_f$ and identify the nodes of $\mathfrak{v}_f$ with the corresponding nodes in the top row of $\mathfrak{d}$. 
This is not necessarily a partial one-row diagram: two propagating lines might start to form an arc. In this case the result of action is zero. Otherwise, 
let $\ell$ be the number of closed loops obtained after the above identification. Omitting the loops we obtain some one-row diagram $\bar{\mathfrak{v}}_f$.
The diagram $\bar{\mathfrak{v}}_f$ may also contain intersections of propagating lines. We numerate the propagating lines of $\mathfrak{v}_f$ by $1,\dots r-f$ on the left of the wall and by $1,\dots s-f$ on the right. Let $\pi_L$ and $\pi_R$ be permutations of $1,\dots r-f$ and $1,\dots s-f$ respectively such that the application of $\pi_L\pi_R$ to the propagating lines' ends of $\mathfrak{v}_f$ gives $\bar{\mathfrak{v}}_f$. The result of the action of $\mathfrak{d}$ on
$\mathfrak{v}_f$ is the combination of 
the partial one-row diagrams obtained from $\bar{\mathfrak{v}}_f$ by forgetting the intersections of propagating lines and writing out the result of the action $\pi_L \left|t^{L}_f\,\right>$ and $\pi_R \left|t^{R}_f\,\right>$ on the vectors of the modules $S(\lambda^L_f)$ and $S(\lambda^R_f)$.

\vskip .1cm
Consider the following vector in the module $C_{r,s}(\plambda_f)$
\begin{equation}\label{eq:rainbow}
   v_f = \big|(r\rightarrow r+1,\,r-1\rightarrow r+2,\ldots, r-f+1\rightarrow r+f),\,\check{t}^{L}_f,\,\check{t}^{R}_f\,\rangle,
\end{equation}
where $\check{t}^{L}_f$ and $\check{t}^{R}_f$ are filled with numbers $1\dots r-f$ and $1\dots s-f$, respectively, in natural order reading down the column from left to right (for an example, see Fig. \ref{rho}).
\begin{figure}[H]
    \begin{equation*}
\left|\left(4\to 9,5\to 8, 6\to 7 \right), \begin{ytableau} 1&3\\ 2 \end{ytableau},\begin{ytableau} 1&2 \end{ytableau} \right\rangle =
\begin{array}{c}
\begin{tikzpicture}
\draw (0,0) --(0,0.9);
\draw (0.4,0) --(0.4,0.9);
\draw (0.8,0) --(0.8,0.9);
\draw (1.2,0) .. controls (1.7,0.7) and (3.1,0.7) .. (3.6,0);
\draw (1.6,0) .. controls (1.9,0.55) and (2.9,0.55) .. (3.2,0);
\draw (2.0,0) .. controls (2.1,0.4) and (2.7,0.4) .. (2.8,0);
\draw [dotted] (2.4,-.2) --(2.4,1.0);
\draw (4.0,0) --(4.0,0.9);
\draw (4.4,0) --(4.4,0.9);
\node at (0,-.4) {$1$};
\node at (1,-.4) {$\dots$};
\node at (2.0,-.4) {$6$};
\node at (3.4,-.4) {$\dots$};
\node at (4.4,-.4) {$11$};
\node at (0,0) {\tiny\textbullet};
\node at (0,0.9) {\tiny\textbullet};
\node at (0.8,0) {\tiny\textbullet};
\node at (0.8,0.9) {\tiny\textbullet};
\node at (1.6,0) {\tiny\textbullet};
\node at (0.4,0.9) {\tiny\textbullet};
\node at (0.4,0) {\tiny\textbullet};
\node at (2.0,0) {\tiny\textbullet};
\node at (1.2,0) {\tiny\textbullet};
\node at (4.4,0) {\tiny\textbullet};
\node at (4.4,0.9) {\tiny\textbullet};
\node at (3.6,0) {\tiny\textbullet};
\node at (3.2,0) {\tiny\textbullet};
\node at (4.0,0) {\tiny\textbullet};
\node at (4.0,0.9) {\tiny\textbullet};
\node at (2.8,0) {\tiny\textbullet};
\node at (0.4,1.6) {$\begin{ytableau}
1 & 3\\
2 
\end{ytableau} $};
\node at (4.25,1.5) {$\begin{ytableau}
1 & 2
\end{ytableau} $};
\end{tikzpicture}
\end{array}
\end{equation*}
\vskip -0.3cm
    \caption{the vector $v_3$ for ${\sf B}_{6,5}(\delta)$}
    \label{rho}
\end{figure}
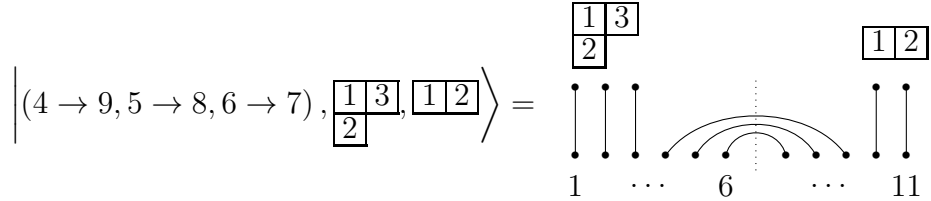

Consider the set $\mathsf{Sh}^L_{r-f,f}\subset\Sn{r}{L}$ (respectively, $\mathsf{Sh}^R_{f,s-f}\subset\Sn{s}{R}$) of words $[1+i_1,1][2+i_2,2]\dots[r-f+i_{r-f},r-f]$ with $-1\leqslant i_1\leqslant i_2\leqslant \dots\leqslant i_{r-f}<f$ (respectively, $[r+1+j_{f},r+1]\dots[r+f+j_{1},r+f]$ with $-1\leqslant j_f\leqslant\dots\leqslant j_1\leqslant s-f$).
The elements of the set $\mathsf{Sh}^L_{r-f,f}$ (respectively, $\mathsf{Sh}^R_{f,s-f}$) represent 
$(r-f,f)$-shuffles (respectively, $(f,s-f)$-shuffles). 

\vskip .1cm
Let $\Sn{f}{L}\subset\Sn{r}{L}$ (respectively, $\Sn{f}{R}\subset\Sn{s}{R}$) be a subset of all monomials in $\Sn{r}{L}$ which include only generators $s_{r-f+1},\ldots,s_{r-1}$ (respectively, $s_{r+1},\ldots,s_{r+f-1}$) for $f>0$. We suppose $\Sn{0}{L} = \left\{1\right\}$ and $\Sn{0}{R} = \left\{1\right\}$. In other words, the elements of $\Sn{f}{L}$
(respectively, $\Sn{f}{R}$) are permutations of the nodes $\{r-f+1,\dots,r\}$ (respectively, $\{r+1,\dots,r+f\}$).

\vskip .1cm
Let $\Theta_f$  with $f\geqslant 0$ denote the following set of permutations from $\Sn{r}{}\times \Sn{s}{}$:
\begin{equation}\label{eq:theta}
    \Theta_f = \mathsf{Sh}^L_{r-f,f}\,\Sn{f}{L}\mathsf{Sh}^R_{f,s-f}.
\end{equation}
It is straightforward to see that the cardinality of $\Theta_f$ is
 $\left(
    \begin{array}{c}
      r \\
      f
    \end{array}
  \right)
  \left(
    \begin{array}{c}
      s \\
      f
    \end{array}
\right)f!$. 

The set $\Theta_f$ 
contains those and only those permutations, from $\Sn{r}{}\times \Sn{s}{}$, of the nodes of the partial one-row diagram corresponding to the vector  $v_f$ which do not permute the propagating lines of the diagram.
Thus $\Theta_f$ produces all possible subsets $l^\prime$ and $l$ of cardinality $f$ and isomorphisms $l^\prime\to l$ as in \eqref{eq:vectors}, {\it i.e.}
\begin{equation}
    \Theta_f v_f= \left\{ \left|l^\prime\to l, \check{t}^L_f,\check{t}^R_f\right>\right\}.
\end{equation}
Let $\Sigma^L_f$ (respectively, $\Sigma^R_f$) 
be the set of all permutations of $\{1,\dots,r-f\}$ (respectively, $\{r+f+1,\dots,r+s\}$)
such that $\sigma_L \check{t}_f^{L}$ and $\sigma_R \check{t}_f^{R}$ reproduce all possible standard tableaux. Let $\Sigma_f$ be the set constituted by permutations 
$\sigma=\sigma_L\sigma_R$ with $\sigma_L\in\Sigma^L_f$ and $\sigma_R\in\Sigma^R_f$. In particular, $\# \Sigma_f = \dim S(\lambda^L_f)\dim S(\lambda^R_f)$.

\vskip .1cm
We introduced the sets $\Sigma^L_f$ and $\Sigma^R_f$ in order to generate vectors \eqref{eq:vectors} with all possible standard tableaux. 
Namely, define the set $X_f$ of permutations from $\Sn{r}{}\times \Sn{s}{}$
\begin{equation}\label{eq:Xf}
    X_f = \Theta_f\Sigma_f. 
\end{equation}
\begin{lemma}\label{genset}
The set of vectors $X_f v_f$ forms a basis of $C_{r,s}(\plambda_f)$. We have
\begin{equation}\label{eq:dim}
    \dim C_{r,s}(\plambda_f)= \# X_f =\left(
    \begin{array}{c}
      r \\
      f
    \end{array}
  \right)
  \left(
    \begin{array}{c}
      s \\
      f
    \end{array}
  \right)f!\;\dim S(\lambda_f^L)\,\dim S(\lambda_f^R).
\end{equation}
\end{lemma}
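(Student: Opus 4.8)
The plan is to construct an explicit bijection between $\Theta_f\times\Sigma_f$ and the basis of $C_{r,s}(\plambda_f)$ recalled in \eqref{eq:vectors}, realised by $(\theta,\sigma)\mapsto\theta\sigma v_f$; this will at once identify $X_f v_f$ with a known basis and pin down $\#X_f$. First observe that the basis $\left\{\left|l'\to l,t^L_f,t^R_f\right\rangle\right\}$ of \eqref{eq:vectors} has cardinality $\binom{r}{f}\binom{s}{f}f!\,\dim S(\lambda^L_f)\dim S(\lambda^R_f)$ — the factor $\binom{r}{f}\binom{s}{f}f!$ counts the choices of $l'$, of $l$ and of an isomorphism $l'\to l$, while $\dim S(\lambda^L_f)\dim S(\lambda^R_f)$ counts the pairs of standard tableaux — so the equality $\dim C_{r,s}(\plambda_f)=$ (right-hand side of \eqref{eq:dim}) is immediate, and it remains to show that $X_f v_f$ is a basis and that $\#X_f$ equals the same number.

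The computation splits according to $X_f=\Theta_f\Sigma_f$ from \eqref{eq:Xf}. First I would analyse $\Sigma_f v_f$. By \eqref{eq:rainbow} the propagating lines of $v_f$ end at the nodes $\{1,\dots,r-f\}$ and $\{r+f+1,\dots,r+s\}$, which are exactly the nodes permuted by $\Sigma_f=\Sigma^L_f\Sigma^R_f$ and are disjoint from the arc nodes $\{r-f+1,\dots,r+f\}$. Since a permutation diagram carries no caps or cups, acting by $\sigma=\sigma_L\sigma_R\in\Sigma_f$ can neither merge two propagating lines into an arc (so the result is never $0$) nor create a closed loop (so no factor $\delta^\ell$ appears); it keeps the rainbow arcs intact and relabels the attached tableaux through the permutations induced on the propagating lines. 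By the standard Specht polytabloid property these relabelled diagrams are again single standard basis vectors, and the defining property of $\Sigma^L_f,\Sigma^R_f$ guarantees that, as $\sigma$ ranges over $\Sigma_f$, they run bijectively over all pairs of standard tableaux. Thus $\Sigma_f v_f$ is precisely the set of vectors $\left|\,\text{(rainbow)},t^L_f,t^R_f\,\right\rangle$ with arbitrary standard $t^L_f,t^R_f$, each obtained once.

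Next I would apply $\Theta_f$ to these vectors. Its elements are again permutations and, by construction, do not permute the propagating lines; acting on $\left|\,\text{(rainbow)},t^L_f,t^R_f\,\right\rangle$ therefore leaves the tableaux untouched and only moves the arc endpoints. Repeating verbatim the argument already giving $\Theta_f v_f=\left\{\left|l'\to l,\check{t}^L_f,\check{t}^R_f\right\rangle\right\}$, now with general tableaux, yields $\theta\,\left|\,\text{(rainbow)},t^L_f,t^R_f\,\right\rangle=\left|\,l'(\theta)\to l(\theta),\,t^L_f,t^R_f\,\right\rangle$, and as $\theta$ ranges over $\Theta_f$ the data $l'(\theta)\to l(\theta)$ runs bijectively over all $\binom{r}{f}\binom{s}{f}f!$ arc configurations. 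Composing the two steps, $(\theta,\sigma)\mapsto\theta\sigma v_f=\left|\,l'(\theta)\to l(\theta),\,t^L_f(\sigma),t^R_f(\sigma)\,\right\rangle$ is a bijection from $\Theta_f\times\Sigma_f$ onto the whole basis \eqref{eq:vectors}. In particular $X_f v_f$ is that basis; and since this composite is injective, the multiplication map $\Theta_f\times\Sigma_f\to X_f$ must be injective as well, whence $\#X_f=\#\Theta_f\,\#\Sigma_f$ equals the right-hand side of \eqref{eq:dim}.

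The main obstacle I anticipate is the clean decoupling of the two actions: one must verify carefully that $\Sigma_f$ acts only on the tableaux and produces genuine single standard basis vectors — this is where the disjointness of the node supports and the polytabloid identity are essential — and that $\Theta_f$ really leaves those tableaux fixed while realising each arc configuration exactly once. Once this factorisation is established, together with the observation that no element of $X_f$ can yield $0$ or a loop factor (all of them being permutation diagrams), the cardinality bookkeeping and the bijection onto the known basis follow automatically.
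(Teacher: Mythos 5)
Your argument is correct and follows exactly the route the paper intends: the paper states Lemma \ref{genset} without a separate proof, relying on the immediately preceding observations that $\Theta_f$ realises every arc configuration $l'\to l$ without permuting the propagating lines (so $\Theta_f v_f=\{|l'\to l,\check t^L_f,\check t^R_f\rangle\}$) and that $\Sigma_f$ runs over all pairs of standard tableaux. Your write-up simply makes explicit the same factorisation $(\theta,\sigma)\mapsto\theta\sigma v_f$ and the resulting bijection with the known basis \eqref{eq:vectors}, including the polytabloid identity $\sigma e_t=e_{\sigma t}$ that the paper leaves implicit.
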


\subsection{Annihilator ideal} 
We proceed by describing the ideal annihilating the vector $v_f\in C_{r,s}(\plambda_f)$. 
The basis $\mathfrak{B}_{r,s}$ will be convenient for that purpose. We associate to any monomial
\begin{equation}
\mathfrak{x}=\w{r+i_1}{r-j_1}\w{r+i_2}{r-j_2}\ldots \w{r+i_f}{r-j_f}\in\mathfrak{D}_{r,s}^{(f)} 
\end{equation}
the element
\begin{equation}\label{varpimfrx}
\varpi (\mathfrak{x}):=\w{r+i_1}{r-j_1}\w{r+i_2}{r-j_2}\ldots \w{r+i_f}{r+1}.
\end{equation}
We denote by $\bar{\mathfrak{D}}_{r,s}^{(f)}$ the image of the set $\mathfrak{D}_{r,s}^{(f)}$,
$$\bar{\mathfrak{D}}_{r,s}^{(f)}:=\left\{\varpi(\mathfrak{x})\ \vert\ \mathfrak{x}\in\mathfrak{D}_{r,s}^{(f)}\right\}.$$
In words, to construct elements in $\bar{\mathfrak{D}}_{r,s}^{(f)}$ we delete the ends $\w{r}{r-j_{f}}$ of the 
monomials in $\mathfrak{D}_{r,s}^{(f)}$.

\vskip .1cm
Note that $0\leqslant i_1< i_2<\ldots<i_f<r$ and $s>j_1> j_2>\ldots>j_f\geqslant 0$ for an element $\mathfrak{x}\in\mathfrak{D}_{r,s}^{(f)}$ 
so $0\leqslant i_1< i_2<\ldots<i_f<r$ and $s>j_1> j_2>\ldots>j_{f-1}> 0$ for the element (\ref{varpimfrx}). 

\vskip .1cm
We define the product of an element 
$$\mathfrak{y}=\w{r+i_1}{r-j_1}\ldots \w{r+i_f}{r+1}\in\bar{\mathfrak{D}}_{r,s}^{(f)}
$$
and the monomial $\w{r}{r-k}$, $k\geq 0$, to be 
$$\mathfrak{y} \ast \w{r}{r-k}:=\left\{ \begin{array}{lll} \w{r+i_1}{r-j_1}\ldots \w{r+i_f}{r-k}&\text{if}&j_{f-1}>k\ ,
\\ \varnothing&&\text{otherwise}\ .\end{array}\right. $$ 
We introduce the sets
\begin{equation}\label{eq:bprime}
    \bar{\mathfrak{B}}_{r,s}^{(t)}= \Sn{r}{L}\bar{\mathfrak{D}}_{r,s}^{(t)},\;t=1\dots\min(r,s).
\end{equation}
We describe the basis of the annihilator ideal of the vector $v_f$ in three steps. 

\paragraph{Part 1.} Let us introduce the following sets of elements of the algebra 
$\wB{r}{s}$:  
\begin{align}
    &\bigcup\nolimits_{i=0}^{f-1} \left(\w{r}{r-i} \w{r+i}{r+1}^{-1}-\delta\right), \label{eq:loop1}\\
    &\bigcup\nolimits_{i = 0}^{\min(f,s-1)-1}\left(\w{r}{r-i} \w{r+i+1}{r+1}^{-1}-1\right), \label{eq:loop2}\\
    &\bigcup\nolimits_{i = 1}^{\min(f,r-1)}\left(\w{r}{r-i} \w{r+i-1}{r+1}^{-1}-1\right), \label{eq:loop3}\\
    &\w{r}{r-f}\w{r+f}{r+1}^{-1} \label{eq:loop4}.
\end{align}
\vskip 0.2cm
\noindent
The elements \eqref{eq:loop1}-\eqref{eq:loop4} annihilate the vector $v_f$ which is clear from the following schematic representation in terms of partial one-row diagrams:
\begin{figure}[H]
    \begin{equation*}
\begin{array}{c}
\begin{tikzpicture}
\node at (0.55,0) {$\dots$};
\draw (1.2,0) .. controls (1.7,0.7) and (3.1,0.7) .. (3.6,0);
\draw (1.6,0) .. controls (1.9,0.55) and (2.9,0.55) .. (3.2,0);
\draw (2.0,0) .. controls (2.1,0.4) and (2.7,0.4) .. (2.8,0);
\draw [dotted] (2.4,0.7) --(2.4,-1.7);
\node at (4.35,0.0) {$\dots$};
\node at (1.6,0) {\tiny\textbullet};
\node at (2.0,0) {\tiny\textbullet};
\node at (1.2,0) {\tiny\textbullet};
\node at (3.6,0) {\tiny\textbullet};
\node at (3.2,0) {\tiny\textbullet};
\node at (2.8,0) {\tiny\textbullet};
\draw (1.2,0) --(1.6,-0.4);
\draw (1.6,0) --(1.2,-0.4);
\draw (2.0,0) --(2.0,-0.4);
\draw (3.2,0) --(3.6,-0.4);
\draw (3.6,0) --(3.2,-0.4);
\draw (2.8,0) --(2.8,-0.4);

\draw (2.0,-0.4) --(1.6,-0.8);
\draw (1.6,-0.4) --(2.0,-0.8);
\draw (1.2,-0.4) --(1.2,-1.55);
\draw (3.2,-0.4) --(2.8,-0.8);
\draw (2.8,-0.4) --(3.2,-0.8);
\draw (3.6,-0.4) --(3.6,-1.55);

\draw (2.0,-0.8) .. controls (2.1,-1.2) and (2.7,-1.2) .. (2.8,-0.8);
\draw (2.0,-1.55) .. controls (2.1,-1.15) and (2.7,-1.15) .. (2.8,-1.55);
\draw (3.2,-0.8) --(3.2,-1.55);
\draw (1.6,-0.8) --(1.6,-1.55);

\node at (1.6,-1.55) {\tiny\textbullet};
\node at (2.0,-1.55) {\tiny\textbullet};
\node at (1.2,-1.55) {\tiny\textbullet};
\node at (3.6,-1.55) {\tiny\textbullet};
\node at (3.2,-1.55) {\tiny\textbullet};
\node at (2.8,-1.55) {\tiny\textbullet};
\node at (5.3,-0.6) {$-$};
\node at (6.4,-0.6) {$\delta$};
\node at (6.65,-0.6) {$\cdot$};
\node at (7.15,-0.8) {$\dots$};
\node at (11.6,-0.6) {$=$};
\node at (12.1,-0.6) {$0$};
\node at (12.45,-0.75) {$,$};
\draw (7.8,-0.8) .. controls (8.3,-0.1) and (9.7,-0.1) .. (10.2,-0.8);
\draw (8.2,-0.8) .. controls (8.5,-0.25) and (9.5,-.25) .. (9.8,-0.8);
\draw (8.6,-0.8) .. controls (8.7,-0.4) and (9.3,-0.4) .. (9.4,-0.8);
\draw [dotted] (9.0,-0.9) --(9.0,-0.1);
\node at (10.95,-0.8) {$\dots$};
\node at (8.2,-0.8) {\tiny\textbullet};
\node at (8.6,-0.8) {\tiny\textbullet};
\node at (7.8,-0.8) {\tiny\textbullet};
\node at (10.2,-0.8) {\tiny\textbullet};
\node at (9.8,-0.8) {\tiny\textbullet};
\node at (9.4,-0.8) {\tiny\textbullet};

\end{tikzpicture}
\end{array}
\end{equation*}
\end{figure}
\vskip -0.5cm
\begin{figure}[H]
    \begin{equation*}
\begin{array}{c}
\begin{tikzpicture}
\node at (0.55,0) {$\dots$};
\draw (1.2,0) .. controls (1.7,0.7) and (3.1,0.7) .. (3.6,0);
\draw (1.6,0) .. controls (1.9,0.55) and (2.9,0.55) .. (3.2,0);
\draw (2.0,0) .. controls (2.1,0.4) and (2.7,0.4) .. (2.8,0);
\draw [dotted] (2.4,0.7) --(2.4,-1.7);
\node at (4.35,0.0) {$\dots$};
\node at (1.6,0) {\tiny\textbullet};
\node at (2.0,0) {\tiny\textbullet};
\node at (1.2,0) {\tiny\textbullet};
\node at (3.6,0) {\tiny\textbullet};
\node at (3.2,0) {\tiny\textbullet};
\node at (2.8,0) {\tiny\textbullet};
\draw (1.2,0) --(1.2,-0.4);
\draw (1.6,0) --(1.6,-0.4);
\draw (2.0,0) --(2.0,-0.4);
\draw (3.2,0) --(3.6,-0.4);
\draw (3.6,0) --(3.2,-0.4);
\draw (2.8,0) --(2.8,-0.4);

\draw (2.0,-0.4) --(1.6,-0.8);
\draw (1.6,-0.4) --(2.0,-0.8);
\draw (1.2,-0.4) --(1.2,-1.55);
\draw (3.2,-0.4) --(2.8,-0.8);
\draw (2.8,-0.4) --(3.2,-0.8);
\draw (3.6,-0.4) --(3.6,-1.55);

\draw (2.0,-0.8) .. controls (2.1,-1.2) and (2.7,-1.2) .. (2.8,-0.8);
\draw (2.0,-1.55) .. controls (2.1,-1.15) and (2.7,-1.15) .. (2.8,-1.55);
\draw (3.2,-0.8) --(3.2,-1.55);
\draw (1.6,-0.8) --(1.6,-1.55);

\node at (1.6,-1.55) {\tiny\textbullet};
\node at (2.0,-1.55) {\tiny\textbullet};
\node at (1.2,-1.55) {\tiny\textbullet};
\node at (3.6,-1.55) {\tiny\textbullet};
\node at (3.2,-1.55) {\tiny\textbullet};
\node at (2.8,-1.55) {\tiny\textbullet};
\node at (5.3,-0.6) {$-$};
\node at (6.6,-0.8) {$\dots$};
\node at (11.0,-0.6) {$=$};
\node at (11.5,-0.6) {$0$};
\node at (11.85,-0.75) {$,$};
\draw (7.15,-0.8) .. controls (7.65,-0.1) and (9.05,-0.1) .. (9.55,-0.8);
\draw (7.55,-0.8) .. controls (7.85,-0.25) and (8.85,-.25) .. (9.15,-0.8);
\draw (7.95,-0.8) .. controls (8.05,-0.4) and (8.65,-0.4) .. (8.75,-0.8);
\draw [dotted] (8.35,-0.9) --(8.35,-0.1);
\node at (10.25,-0.8) {$\dots$};
\node at (7.55,-0.8) {\tiny\textbullet};
\node at (7.95,-0.8) {\tiny\textbullet};
\node at (7.15,-0.8) {\tiny\textbullet};
\node at (9.55,-0.8) {\tiny\textbullet};
\node at (9.15,-0.8) {\tiny\textbullet};
\node at (8.75,-0.8) {\tiny\textbullet};

\end{tikzpicture}
\end{array}
\end{equation*}
\end{figure}
\vskip -0.5cm
\begin{figure}[H]
    \begin{equation*}
\begin{array}{c}
\begin{tikzpicture}
\node at (0.55,0) {$\dots$};
\draw (1.2,0) .. controls (1.7,0.7) and (3.1,0.7) .. (3.6,0);
\draw (1.6,0) .. controls (1.9,0.55) and (2.9,0.55) .. (3.2,0);
\draw (2.0,0) .. controls (2.1,0.4) and (2.7,0.4) .. (2.8,0);
\draw [dotted] (2.4,0.7) --(2.4,-1.7);
\node at (4.35,0.0) {$\dots$};
\node at (1.6,0) {\tiny\textbullet};
\node at (2.0,0) {\tiny\textbullet};
\node at (1.2,0) {\tiny\textbullet};
\node at (3.6,0) {\tiny\textbullet};
\node at (3.2,0) {\tiny\textbullet};
\node at (2.8,0) {\tiny\textbullet};
\draw (1.2,0) --(1.6,-0.4);
\draw (1.6,0) --(1.2,-0.4);
\draw (2.0,0) --(2.0,-0.4);
\draw (3.2,0) --(3.2,-0.4);
\draw (3.6,0) --(3.6,-0.4);
\draw (2.8,0) --(2.8,-0.4);

\draw (2.0,-0.4) --(1.6,-0.8);
\draw (1.6,-0.4) --(2.0,-0.8);
\draw (1.2,-0.4) --(1.2,-1.55);
\draw (3.2,-0.4) --(2.8,-0.8);
\draw (2.8,-0.4) --(3.2,-0.8);
\draw (3.6,-0.4) --(3.6,-1.55);

\draw (2.0,-0.8) .. controls (2.1,-1.2) and (2.7,-1.2) .. (2.8,-0.8);
\draw (2.0,-1.55) .. controls (2.1,-1.15) and (2.7,-1.15) .. (2.8,-1.55);
\draw (3.2,-0.8) --(3.2,-1.55);
\draw (1.6,-0.8) --(1.6,-1.55);

\node at (1.6,-1.55) {\tiny\textbullet};
\node at (2.0,-1.55) {\tiny\textbullet};
\node at (1.2,-1.55) {\tiny\textbullet};
\node at (3.6,-1.55) {\tiny\textbullet};
\node at (3.2,-1.55) {\tiny\textbullet};
\node at (2.8,-1.55) {\tiny\textbullet};
\node at (5.3,-0.6) {$-$};
\node at (6.6,-0.8) {$\dots$};
\node at (11.0,-0.6) {$=$};
\node at (11.5,-0.6) {$0$};
\node at (11.85,-0.75) {$,$};
\draw (7.15,-0.8) .. controls (7.65,-0.1) and (9.05,-0.1) .. (9.55,-0.8);
\draw (7.55,-0.8) .. controls (7.85,-0.25) and (8.85,-.25) .. (9.15,-0.8);
\draw (7.95,-0.8) .. controls (8.05,-0.4) and (8.65,-0.4) .. (8.75,-0.8);
\draw [dotted] (8.35,-0.9) --(8.35,-0.1);
\node at (10.25,-0.8) {$\dots$};
\node at (7.55,-0.8) {\tiny\textbullet};
\node at (7.95,-0.8) {\tiny\textbullet};
\node at (7.15,-0.8) {\tiny\textbullet};
\node at (9.55,-0.8) {\tiny\textbullet};
\node at (9.15,-0.8) {\tiny\textbullet};
\node at (8.75,-0.8) {\tiny\textbullet};

\end{tikzpicture}
\end{array}
\end{equation*}
\end{figure}
\vskip -0.5cm
\begin{figure}[H]
    \begin{equation*}
\begin{array}{c}
\begin{tikzpicture}
\node at (0.2,0) {$\dots$};
\draw (1.2,0) .. controls (1.7,0.7) and (3.1,0.7) .. (3.6,0);
\draw (1.6,0) .. controls (1.9,0.55) and (2.9,0.55) .. (3.2,0);
\draw (2.0,0) .. controls (2.1,0.4) and (2.7,0.4) .. (2.8,0);
\draw [dotted] (2.4,0.7) --(2.4,-1.7);
\node at (4.7,0.0) {$\dots$};
\node at (1.6,0) {\tiny\textbullet};
\node at (2.0,0) {\tiny\textbullet};
\node at (1.2,0) {\tiny\textbullet};
\node at (3.6,0) {\tiny\textbullet};
\node at (3.2,0) {\tiny\textbullet};
\node at (2.8,0) {\tiny\textbullet};
\node at (4,0) {\tiny\textbullet};
\node at (0.8,0) {\tiny\textbullet};
\draw (1.2,0) --(0.8,-0.4);
\draw (0.8,0) --(1.2,-0.4);
\draw (0.8,0) --(0.8,0.7);
\draw (0.8,-0.4) --(0.8,-1.95);
\draw (1.6,0) --(1.6,-0.4);
\draw (2.0,0) --(2.0,-0.4);

\draw (3.6,0) --(4.0,-0.4);
\draw (4.0,0) --(3.6,-0.4);
\draw (4.0,0) --(4.0,0.7);
\draw (4.0,-0.4) --(4.0,-1.95);
\draw (2.8,0) --(2.8,-0.4);
\draw (3.2,0) --(3.2,-0.4);

\draw (1.2,-0.4) --(1.6,-0.8);
\draw (1.6,-0.4) --(1.2,-0.8);
\draw (2.0,-0.4) --(2.0,-0.8);
\draw (3.2,-0.4) --(3.6,-0.8);
\draw (3.6,-0.4) --(3.2,-0.8);
\draw (2.8,-0.4) --(2.8,-0.8);

\draw (2.0,-0.8) --(1.6,-1.2);
\draw (1.6,-0.8) --(2.0,-1.2);
\draw (1.2,-0.8) --(1.2,-1.95);
\draw (3.2,-0.8) --(2.8,-1.2);
\draw (2.8,-0.8) --(3.2,-1.2);
\draw (3.6,-0.8) --(3.6,-1.95);

\draw (2.0,-1.2) .. controls (2.1,-1.6) and (2.7,-1.6) .. (2.8,-1.2);
\draw (2.0,-1.95) .. controls (2.1,-1.55) and (2.7,-1.55) .. (2.8,-1.95);
\draw (3.2,-1.2) --(3.2,-1.95);
\draw (1.6,-1.2) --(1.6,-1.95);

\node at (5.5,-0.7) {$=$};
\node at (6.0,-0.7) {$0$};
\node at (6.35,-0.85) {$.$};

\node at (1.6,-1.95) {\tiny\textbullet};
\node at (2.0,-1.95) {\tiny\textbullet};
\node at (1.2,-1.95) {\tiny\textbullet};
\node at (3.6,-1.95) {\tiny\textbullet};
\node at (3.2,-1.95) {\tiny\textbullet};
\node at (2.8,-1.95) {\tiny\textbullet};
\node at (0.8,-1.95) {\tiny\textbullet};
\node at (4.0,-1.95) {\tiny\textbullet};

\end{tikzpicture}
\end{array}
\end{equation*}
\end{figure}
\vskip -0.5cm
\noindent 

Let $\Upsilon^k_f\subset\mathsf{Sh}^R_{f,s-f}\Sn{f}{R}\Sigma^R_f$ be a subset of all monomials in $\Sn{s}{R}$ which include only generators $s_{r+k},\dots,s_{r+s-1}$. For brevity denote by $\wpp{r+1}^{(i)}$ for $i = 1\ldots f$ the set of words
\begin{equation}
  [r+1+k_1,r+1]\dots[r+i+k_i,r+i],\,\,\,0\leqslant k_1<s-1,\dots , 0\leqslant k_i<s-i . 
\end{equation}
We set $\wpp{r+1}^{(0)} = \left\{1\right\}$. 

\vskip .1cm
We construct the following sets ($t=1\ldots\min(r,s)$)  of elements of the algebra $\wB{r}{s}$: 
\begin{align}
    &\label{eq:loops1} \bar{\mathfrak{B}}_{r,s}^{(t)}\ast\bigcup\nolimits_{i=0}^{f-1}\left([r,r-i]\wpp{r+1}^{(i)} -\delta\right)\Upsilon_{f}^{i+2},\\
    &\label{eq:loops2}  \bar{\mathfrak{B}}_{r,s}^{(t)}\ast\bigcup\nolimits_{i=0}^{\min(f,s-1)-1}\left([r,r-i]\wpp{r+1}^{(i+1)}-1\right)\Upsilon_{f}^{i+2},\\
    &\label{eq:loops3} \bar{\mathfrak{B}}_{r,s}^{(t)}\ast\bigcup\nolimits_{i = 1}^{\min(f,r-1)}\bigcup\nolimits_{j=i+1}^r\left([r,j-i]\wpp{r+1}^{(i-1)}-1\right)\Upsilon_{f}^{i+1},
        \\
    &\label{eq:loops4} \bar{\mathfrak{B}}_{r,s}^{(t)}\ast[r,r-f]\wpp{r+1}^{(f)}\Upsilon_{f}^{f+1}.  
\end{align}
\vskip 0.2cm
\noindent
A direct inspection shows that the elements \eqref{eq:loops1}-\eqref{eq:loops4} annihilate the vector $v_f$ since the elements \eqref{eq:loop1}-\eqref{eq:loop4} do.

\paragraph{Part 2} Consider the set $\{s_{r+i}-s_{r-i},\,i=1,\dots, f-1\}$. The elements of this set annihilate the vector $v_f$, see figure below
\begin{figure}[H]
    \begin{equation*}
\begin{array}{c}
\begin{tikzpicture}
\node at (0.55,0) {$\dots$};
\draw (1.2,0) .. controls (1.7,0.7) and (3.1,0.7) .. (3.6,0);
\draw (1.6,0) .. controls (1.9,0.55) and (2.9,0.55) .. (3.2,0);
\node at (2.4,0) {$\dots$};
\draw [dotted] (2.4,0.7) --(2.4,-0.5);
\node at (4.35,0.0) {$\dots$};
\node at (1.6,0) {\tiny\textbullet};

\node at (1.2,0) {\tiny\textbullet};
\node at (3.6,0) {\tiny\textbullet};
\node at (3.2,0) {\tiny\textbullet};

\draw (1.6,0) --(1.6,-0.4);
\draw (1.2,0) --(1.2,-0.4);
\draw (3.2,0) --(3.6,-0.4);
\draw (3.6,0) --(3.2,-0.4);

\node at (1.6,-0.4) {\tiny\textbullet};
\node at (1.2,-0.4) {\tiny\textbullet};
\node at (3.6,-0.4) {\tiny\textbullet};
\node at (3.2,-0.4) {\tiny\textbullet};

\node at (6.3,0) {$\dots$};
\draw (6.85,0) .. controls (7.35,0.7) and (8.75,0.7) .. (9.25,0);
\draw (7.25,0) .. controls (7.55,0.55) and (8.55,0.55) .. (8.85,0);
\node at (9.95,0) {$\dots$};
\draw [dotted] (8.05,0.7) --(8.05,-0.5);
\node at (8.11,0.0) {$\dots$};
\node at (6.85,0) {\tiny\textbullet};
\node at (9.25,0) {\tiny\textbullet};
\node at (7.25,0) {\tiny\textbullet};
\node at (8.85,0) {\tiny\textbullet};

\node at (5.3,-0.1) {$-$};
\node at (10.7,-0.1) {$=$};
\node at (11.2,-0.1) {$0$};
\node at (11.55,-0.25) {$.$};

\draw (6.85,0) --(7.25,-0.4);
\draw (7.25,0) --(6.85,-0.4);
\draw (8.85,0) --(8.85,-0.4);
\draw (9.25,0) --(9.25,-0.4);

\node at (6.85,-0.4) {\tiny\textbullet};
\node at (9.25,-0.4) {\tiny\textbullet};
\node at (7.25,-0.4) {\tiny\textbullet};
\node at (8.85,-0.4) {\tiny\textbullet};

\end{tikzpicture}
\end{array}
\end{equation*}
\end{figure}
\vskip -0.5cm 
\noindent
Given a word ${\sf x}$ in $\Sn{f}{R}\backslash \{1\}$ let $s_{r+i}$ be its leftmost generator ($i=1\ldots f-1$). Denote by ${\sf x}_c$ the element of the algebra $\wB{r}{s}$
obtained by replacing the letter $s_{r+i}$ in the word ${\sf x}$ by the combination $(s_{r+i}-s_{r-i})$. Define the set $\overline{\mathfrak{S}}^{R}_{f}$ constituted by 
elements ${\sf x}_c$, ${\sf x}\in \Sn{f}{R}\backslash \{1\}$. The elements of the set
\begin{equation}\label{eq:arcs}
        \Theta_f\overline{\mathfrak{S}}^{R}_{f}\Sigma_f
\end{equation}
annihilate the vector $v_f$ as well.

\paragraph{Part 3} We recall some results from \cite{P}. Let $S(\lambda)$ be the Specht module for the symmetric group $\Sn{n}{}$ for some $n$. 
Consider the vector in $S(\lambda)$ corresponding to the tableau $\check{t}$ filled with numbers $1\dots n$ in natural order reading down the column from left to right. The annihilator ideal of $\check{t}$ is the left ideal generated by the Garnir elements and $1+\tau$ where $\tau$ are transpositions in the column stabiliser of the tableau.

\vskip .1cm
Denote $\mathfrak{g}^L_f$ (respectively, $\mathfrak{g}^R_f$) a basis of the annihilator ideal of the vector $\check{t}_f^{L}$ in $S\left(\lambda^L_f\right)$ (respectively, $\check{t}_f^{R}$ in $S\left(\lambda^R_f\right)$). The following elements of the algebra $\wB{r}{s}$
\begin{align}
        &\label{eq:loops_g} \Sn{r}{L}\mathfrak{D}_{r,s}^{(f)}\mathsf{Sh}^R_{f,s-f}\,\Sn{f}{R}\mathfrak{g}^R_f,\\
        &\label{eq:LR} \Theta_f\,\Sn{f}{R}\,\left(\mathfrak{g}^L_f\Sigma^R_f\cup \Sigma^L_f\mathfrak{g}^R_f\cup\mathfrak{g}^L_f\mathfrak{g}^R_f\right)
\end{align}
\vskip 0.1cm
\noindent
annihilate the vector $v_f$ because they annihilate $\check{t}_f^{L}$ and $\check{t}_f^{R}$.

\vskip .1cm
Let $A_f$ be the union of all sets \eqref{eq:loops1}-\eqref{eq:loops4}, \eqref{eq:arcs}, \eqref{eq:loops_g}, \eqref{eq:LR}. The following Lemma holds.
\begin{lemma}\label{annihilator}
The set $A_f$ is a basis of annihilator ideal of the vector $v_f$,
\begin{equation}
    \# A_f = \dim\wB{r}{s} - \dim C_{r,s}(\plambda_f).
\end{equation}
\end{lemma}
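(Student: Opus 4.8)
The plan is to reduce the statement to two independent facts about $A_f$—that it is linearly independent and that $\#A_f=\dim\wB{r}{s}-\dim C_{r,s}(\plambda_f)$—and then to prove both at once by a leading-term argument against the normal form $\mathfrak{B}_{r,s}$. Since $X_f v_f$ is a basis of $C_{r,s}(\plambda_f)$ by Lemma \ref{genset} and $X_f\subset\wB{r}{s}$, the linear map $\pi:\wB{r}{s}\to C_{r,s}(\plambda_f)$, $a\mapsto a v_f$, is surjective; by rank-nullity the annihilator left ideal $\mathrm{Ann}(v_f)=\ker\pi$ has dimension $\dim\wB{r}{s}-\dim C_{r,s}(\plambda_f)$. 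The diagrammatic computations of Parts 1--3 already give the inclusion $A_f\subseteq\mathrm{Ann}(v_f)$. Consequently, once $A_f$ is shown to be linearly independent of cardinality $\dim\mathrm{Ann}(v_f)$, it is automatically a basis of $\mathrm{Ann}(v_f)$.

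The key device is a leading-term map $\mathrm{lt}:A_f\to\mathfrak{B}_{r,s}$. I would order the normal form monomials first by their number of arcs (more arcs being higher) and then, within a fixed arc number, by a refinement adapted to the Specht-module structure. Each generator of $A_f$ is a difference whose two parts differ either in arc number or in this refinement, and I set $\mathrm{lt}(a)$ equal to the $\varphi_{\mathfrak{R}}$-reduced form of the dominant part. For the families \eqref{eq:loops1}--\eqref{eq:loops4} the dominant part is the one retaining the factor $\w{r}{r-i}$ (deleting the scalars $-\delta$, $-1$ closes a loop and lowers the arc number), and for \eqref{eq:loops_g} it is the leading monomial of the Garnir element $\mathfrak{g}^R_f$ inside the $f$-arc stratum; for the permutation families \eqref{eq:arcs} and \eqref{eq:LR} the terms are $0$-arc, and $\mathrm{lt}$ picks the summand containing the distinguished letter $s_{r+i}$, respectively the leading monomials of $\mathfrak{g}^L_f,\mathfrak{g}^R_f$ together with the chosen tableaux. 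In every case $a-\mathrm{lt}(a)$ is a combination of strictly lower monomials, so the transition matrix from $A_f\cup\widetilde{X_f}$ (with $\widetilde{X_f}$ the normal forms of $X_f$) to $\mathfrak{B}_{r,s}$ is unitriangular.

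The central claim, and the main obstacle, is that $\mathrm{lt}$ is injective and that its image is exactly $\mathfrak{B}_{r,s}\setminus\widetilde{X_f}$. This amounts to checking that the seven families produce pairwise distinct leading monomials tiling the complement of $\widetilde{X_f}$: the loop families \eqref{eq:loops1}--\eqref{eq:loops4} together with \eqref{eq:loops_g} should exhaust all normal form monomials carrying at least one arc, while \eqref{eq:arcs} and \eqref{eq:LR} should exhaust precisely the $0$-arc monomials of $\Sn{r}{L}\Sn{s}{R}$ not occurring among the $\widetilde{X_f}$. The latter matching is the delicate point: it relies on the decomposition $\Theta_f=\mathsf{Sh}^L_{r-f,f}\Sn{f}{L}\mathsf{Sh}^R_{f,s-f}$ of \eqref{eq:theta} and $X_f=\Theta_f\Sigma_f$ of \eqref{eq:Xf}, on the fact that $\overline{\mathfrak{S}}^R_f$ encodes the redundancy of the right arc-permutations $\Sn{f}{R}$ against the left ones $\Sn{f}{L}$ already present in $\Theta_f$, and on the Specht-module result of Part 3 that $\mathfrak{g}^L_f,\mathfrak{g}^R_f$ are bases of codimension $\dim S(\lambda^L_f)$, $\dim S(\lambda^R_f)$ of the cyclic vectors $\check t^L_f,\check t^R_f$. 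One must verify that no collision occurs across strata, especially at arc number $f$, where \eqref{eq:loops_g} ($f$ arcs) and \eqref{eq:arcs},\eqref{eq:LR} ($0$ arcs) must dovetail with $\#\widetilde{X_f}=\dim C_{r,s}(\plambda_f)$.

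Granting the claim, injectivity of $\mathrm{lt}$ makes the unitriangular family $A_f$ linearly independent, and $\#A_f=\#\bigl(\mathfrak{B}_{r,s}\setminus\widetilde{X_f}\bigr)=(r+s)!-\dim C_{r,s}(\plambda_f)$ matches $\dim\mathrm{Ann}(v_f)$; hence $A_f$ is a basis of $\mathrm{Ann}(v_f)$. I would run the cardinality count in parallel as a consistency test, summing $\#\bar{\mathfrak{B}}_{r,s}^{(t)}$, the index ranges of $\wpp{r+1}^{(i)}$ and $\Upsilon_f^{k}$, $\#\Theta_f=\binom{r}{f}\binom{s}{f}f!$, $\#\Sigma_f=\dim S(\lambda^L_f)\dim S(\lambda^R_f)$, and the Garnir codimensions, and checking that the total telescopes to $(r+s)!-\dim C_{r,s}(\plambda_f)$. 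The same unitriangular transition shows that $A_f\cup X_f$ is a basis of $\wB{r}{s}$, which is Theorem \ref{decomp}.
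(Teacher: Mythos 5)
Your overall strategy coincides with the paper's: reduce the lemma to showing that $A_f$ is linearly independent of cardinality $\dim\wB{r}{s}-\dim C_{r,s}(\plambda_f)$ (the inclusion $A_f\subseteq\mathrm{Ann}(v_f)$ and the rank--nullity identification of $\dim\mathrm{Ann}(v_f)$ being already in hand from Lemma \ref{genset} and Parts 1--3), and then establish both by a leading-term argument in which each element of $A_f$ has a dominant monomial singled out by the number of occurrences of $s_r$, so that the transition to the normal form $\mathfrak{B}_{r,s}$ is triangular. This is exactly the paper's filtration by the number of letters $s_r$.

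However, the step you label ``the central claim, and the main obstacle'' --- that the dominant terms are pairwise distinct and, up to a triangular change of basis, exhaust the complement of $X_f$ --- is not a point you may defer: it is the entire content of the proof, and your proposal does not establish it. The paper closes it by explicit computation. Writing $M_i^{(1)},M_i^{(2)},M_i^{(3)}$ for the sets of dominant terms of \eqref{eq:loops1}--\eqref{eq:loops3} at fixed $i$, one verifies the union identity $M_i^{(1)}\cup M_i^{(2)}\cup M_i^{(3)}=\bar{\mathfrak{B}}_{r,s}^{(t)}\ast\w{r}{r-i}\Upsilon^1_f$ (and its analogue for $i=f$ using \eqref{eq:loops4}), whose union over $i$ and $t$ is $\Sn{r}{L}\mathfrak{D}_{r,s}^{(t)}\Upsilon^1_f\subseteq\mathfrak{B}_{r,s}^{(t)}$ with $\Upsilon^1_f=\mathsf{Sh}^R_{f,s-f}\Sn{f}{R}\Sigma^R_f$; adjoining \eqref{eq:loops_g} and invoking Peel's theorem that $\mathfrak{g}^R_f$ and $\Sigma^R_f$ together form a basis yields a linearly independent set $\mathcal{B}$ with $\#\mathcal{B}=(r+s)!-r!s!$. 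For the $0$-arc part, replacing the elements of $\overline{\mathfrak{S}}^R_f$ in \eqref{eq:arcs} by their pullbacks in $\Sn{f}{R}\setminus\{1\}$ produces, together with \eqref{eq:LR}, a disjoint union equal to $\left(\Theta_f\Sn{f}{R}\Sigma_f\right)\setminus X_f$ inside the basis $\Theta_f\Sn{f}{R}\Sigma_f$ of $\mathbb{C}\left[\Sn{r}{}\times\Sn{s}{}\right]$, of cardinality $r!s!-\dim C_{r,s}(\plambda_f)$; note that the natural basis here is $\Theta_f\Sn{f}{R}\Sigma_f$, not $\Sn{r}{L}\Sn{s}{R}$ as your ``image equals $\mathfrak{B}_{r,s}\setminus\widetilde{X_f}$'' formulation suggests. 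Only after these two identifications does the $s_r$-filtration give linear independence of $A_f$ and the count $\#A_f=(r+s)!-\dim C_{r,s}(\plambda_f)$. Until you carry out the analogue of these computations --- in particular the union identity for the families \eqref{eq:loops1}--\eqref{eq:loops4}, which is precisely where collisions could occur --- your argument remains a plan rather than a proof.
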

\begin{proof}
First let us show that the sets \eqref{eq:loops1}-\eqref{eq:loops4} and \eqref{eq:loops_g} are linearly independent. For that purpose consider the `higher' terms in \eqref{eq:loops1}-\eqref{eq:loops3}: 

\begin{align}
    &\label{nn1} \bar{\mathfrak{B}}_{r,s}^{(t)}\ast\bigcup\nolimits_{i=0}^{f-1}\left([r,r-i]\wpp{r+1}^{(i)} \right)\Upsilon_{f}^{i+2},\\
    &\label{nn2}  \bar{\mathfrak{B}}_{r,s}^{(t)}\ast\bigcup\nolimits_{i=0}^{\min(f,s-1)-1}\left([r,r-i]\wpp{r+1}^{(i+1)}\right)\Upsilon_{f}^{i+2},\\
    &\label{nn3} \bar{\mathfrak{B}}_{r,s}^{(t)}\ast\bigcup\nolimits_{i = 1}^{\min(f,r-1)}\bigcup\nolimits_{j=i+1}^r\left([r,j-i]\wpp{r+1}^{(i-1)}\right)\Upsilon_{f}^{i+1}.
\end{align}
Let
$$M_i^{(1)}:=\bar{\mathfrak{B}}_{r,s}^{(t)}\ast\left([r,r-i]\wpp{r+1}^{(i)} \right)\Upsilon_{f}^{i+2}\ ,$$
so that the set in (\ref{nn1}) is the union of sets $M_i^{(1)}$, $i=0,1,\dots, f-1$. Similarly, let
$$M_i^{(2)}:=\bar{\mathfrak{B}}_{r,s}^{(t)}\ast\left([r,r-i]\wpp{r+1}^{(i+1)}\right)\Upsilon_{f}^{i+2}, $$
and
$$M_i^{(3)}:= \bar{\mathfrak{B}}_{r,s}^{(t)}\ast\bigcup\nolimits_{j=i+1}^r\left([r,j-i]\wpp{r+1}^{(i-1)}\right)\Upsilon_{f}^{i+1}.$$
The union of sets $M_i^{(1)}$, $M_i^{(2)}$ and $M_i^{(3)}$ for fixed $i$ ($0\leqslant i<f$) is
\vskip -0.5cm
\begin{multline}\label{eq:union}
\bar{\mathfrak{B}}_{r,s}^{(t)}\ast [r,r-i]\left(\wpp{r+1}^{(i)}\Upsilon_f^{i+2}\cup\wpp{r+1}^{(i+1)}\Upsilon_f^{i+2}\cup\left(\bigcup_{k=1}^i\wpp{r+1}^{(k-1)}\Upsilon_f^{k+1}\right)\right)\\
=\bar{\mathfrak{B}}_{r,s}^{(t)}\ast \w{r}{r-i}\Upsilon^{1}_{f},\quad t=1\ldots \min(r,s).
\end{multline}
\vskip 0.2cm
\noindent
For $i=f$ there are no sets \eqref{nn1} and \eqref{nn2}; the union of $M_f^{(1)}$ and the set (\ref{eq:loops4}) 
is, similarly to
\eqref{eq:union}: 
\begin{equation}\label{nbv1}\bar{\mathfrak{B}}_{r,s}^{(t)} \ast\w{r}{r-f}\Upsilon^{1}_{f}, \quad t=1\ldots \min(r,s).\end{equation} 
It follows from the definition of $\bar{\mathfrak{B}}_{r,s}^{(t)}$ that the union of the expressions \eqref{eq:union} for $i=0\ldots f-1$ and (\ref{nbv1}) forms a subset $\Sn{r}{L}\mathfrak{D}_{r,s}^{(t)}\Upsilon^{1}_{f}\subset \mathfrak{B}_{r,s}^{(t)}$, thus the expressions \eqref{eq:union} for $i=0\ldots f-1$ and (\ref{nbv1}) are linearly independent.
Moreover, we have, by construction, $\Upsilon^{1}_{f}=\mathsf{Sh}^R_{f,s-f}\,\Sn{f}{R}\Sigma^R_f$.  

\vskip .1cm 
The union of the expressions \eqref{eq:loops_g}, \eqref{eq:union} and (\ref{nbv1}) over $t=1\ldots \min(r,s)$ and $i = 0\ldots f$ will be denoted by $\mathcal{B}$. 
Since, by \cite{P}, the elements of $\mathfrak{g}^{L}_f$ (respectively, $\mathfrak{g}^{R}_f$) and $\Sigma^{L}_f$ (respectively, $\Sigma^{R}_f$) together form a basis in $\Sn{r}{}$ (respectively, $\Sn{s}{}$), we conclude that 
$\mathcal{B}$ is a linearly independent set.  The same is true for the expressions \eqref{eq:loops1}-\eqref{eq:loops4} and \eqref{eq:loops_g}. Indeed, each element in \eqref{eq:loops1}-\eqref{eq:loops3} is a combination of two monomials: the first one is a minimal word and contains more letters $s_r$ than the second. 
The number of occurrences of the letter $s_r$ in a word defines a filtration on the algebra $\wB{r}{s}$.  
Assume that the expressions \eqref{eq:loops1}-\eqref{eq:loops4} and \eqref{eq:loops_g} are not linearly independent. Choose then a shortest non-trivial linear dependency. 
The coefficients of the words containing the maximal number of letters $s_r$ are zero (because these are the ones from $\mathcal{B}$) contradicting to the 
minimality of length of of dependency.

\vskip .1cm
Each expression \eqref{eq:arcs} is a sum of two words from the set $\Theta_f\Sn{f}{R}\Sigma_f$, one containing more generators from $\Sn{r}{R}$ than the other. 
This implies the linear independence of the set  \eqref{eq:arcs}.

\vskip .1cm
Next, we move to showing that the union of the sets \eqref{eq:arcs} and \eqref{eq:LR} is linearly independent. First, note that 
replacing in the expressions \eqref{eq:arcs} the elements from $\overline{\mathfrak{S}}_f^R$ by their pullbacks from $\Sn{f}{R}\backslash \{1\}$ we obtain a set $\mathcal{N}$ whose union with the expressions \eqref{eq:LR}  
is disjoint and equals $\left(\Theta_f\Sn{f}{R}\Sigma_f\right)\backslash X_f$. The set $\Theta_f\Sn{f}{R}\Sigma_f$
is a basis in $\mathbb{C}\left[\Sn{r}{}\times\Sn{s}{}\right]$. Therefore the union of the sets $\mathcal{N}$ and  \eqref{eq:LR} is linearly independent.  
The argument appealing to the length, defined by the number of generators from $\Sn{r}{R}$, completes the proof of the linear independence of union of the sets \eqref{eq:arcs} and \eqref{eq:LR}.

\vskip .1cm
The expressions from the sets \eqref{eq:arcs} and \eqref{eq:LR} do not contain generators $s_r$ and therefore the whole set $A_f$ consists of linearly independent elements.
 
 \vskip .1cm
To calculate $\# A_f$, note that: 
\begin{itemize}
\item[a)] $\# \mathcal{B} = (r+s)! - r!s!$ because $\mathcal{B}=\mathfrak{B}_{r,s}\backslash \mathfrak{B}_{r,s}^{(0)}$, 
\item[b)] the cardinality of the union of the sets \eqref{eq:arcs} and \eqref{eq:LR} is $r!s!-\dim C_{r,s}(\plambda_f)$ (see Lemma \ref{genset}).
\end{itemize}
As a result, we arrive at the correct cardinality for the annihilator ideal $\# A = \dim\wB{r}{s} - \dim C_{r,s}(\plambda_f)$.
\end{proof}
\vskip 0.2cm
\noindent
Lemmas \ref{genset} and \ref{annihilator} together provide a constructive proof of the Theorem \ref{decomp}.

\section*{Acknowledgements}
D.B. is grateful to A.~Kiselev for participation at the early stage of the project. The work of D.B. is funded by Excellence Initiative of Aix-Marseille University -- A*MIDEX and Excellence Laboratory Archimedes LabEx, French ``Investissements d'Avenir'' programmes. The work of Y.G. is supported by a joint grant ``50/50'' UMONS -- Universit\'e Fran\c{c}ois Rabelais de Tours. The work of O. O. was supported by the Program of Competitive Growth of Kazan Federal University and by the ANR grant ``Phymath".

\section*{Appendix}\label{Append}
\subsection*{A: partial order on $\wBfree{r}{s}$}\label{Order}
We introduce a partial order $\bec$ on the set of monomials from the monoid $\wBfree{r}{s}$ which is compatible with $\mathfrak{R}^\prime$. For a generator $\hat{s}_i$, $i=1,\ldots ,r+s-1$, and a monomial $\sfw\in\wBfree{r}{s}$ we shall write $\hat{s}_i\in\sfw$ (respectively, $\hat{s}_i\notin\sfw$) whenever $\hat{s}_i$ is (respectively, is not) contained in $\sfw$; $\pi_{L}(\sfw)$ (respectively, $\pi_{R}(\sfw)$)  means the substitution $\hat{s}_i\to 1$ for all generators with $i>r$ (respectively, $i<r$); the unit element $1$ stands for the empty word. For a word $\sfw\in\wBfree{r}{s}$, $|\sfw|$ means the length of $\sfw$, while $|\sfw|_i$ denotes the number of occurrences of $\hat{s}_i$ in $\sfw$. The symbol $|\sfw|_{L}$ (respectively, $|\sfw|_{R}$) denotes the number of generators $\hat{s}_i\in\sfw$ with $i=1,\ldots ,r-1$ (respectively, $r+1,\ldots ,r+s-1$). By definition, $|1| = |1|_i = |1|_L = |1|_R = 0$. 

\vskip .1cm
First we introduce the following partial order $\bbec$ on the subset $\Snfree{r}{s}\subset \wBfree{r}{s}$ constituted by all monomials such that $\sfw = \pi_L(\sfw)\pi_R(\sfw)$. First, compare the lengths of monomials $\sfu,\sfv\in\Snfree{r}{s}$: $|\sfu|_L<|\sfv|_L$ $\Rightarrow$ $\sfu\bbec\sfv$, while if $|\sfu|_L=|\sfv|_L$ then $|\sfu|_R<|\sfv|_R$ $\Rightarrow$ $\sfu\bbec\sfv$. In case $|\sfu| = |\sfv| = N>0$ and $|\sfu|_L = |\sfv|_L = N^\prime >0$ compare $\pi_L(\sfu) = \hat{s}_{i_1}\ldots \hat{s}_{i_{N^\prime}}$ and $\pi_L(\sfv) = \hat{s}_{j_1}\ldots \hat{s}_{j_{N^\prime}}$ lexicographically from left to right: let $k = 1\ldots N^\prime$ be the minimal number such that $i_p=j_p$ for all $p<k$ but not for $k$. Then $i_k<j_k$ implies $\sfu\bbec\sfv$. If $k$ doesn't exist, as well as if $N^\prime = 0$, compare $\pi_R(\sfu) = \hat{s}_{m_1}\ldots \hat{s}_{m_{N^{\prime\prime}}}$ and $\pi_R(\sfv) = \hat{s}_{n_1}\ldots \hat{s}_{n_{N^{\prime\prime}}}$ lexicographically from right to left: let $l=1\ldots N^{\prime\prime}$ be the maximal number such that $m_q = n_q$ for all $q>l$ but not for $l$. Then $m_l>n_l$ implies $\sfu\bbec\sfv$. Otherwise, $\sfu,\sfv\in\Snfree{r}{s}$ are incomparable with respect to $\bbec$.

To compare $\sfu,\sfv\in\wBfree{r}{s}$ one proceeds consecutively by the following steps. If monomials $\sfu,\sfv$ do not meet any conditions at a given step then one moves to the next step.
\begin{itemize}
    \item[{\bf i.}] Compare the lengths of monomials: $|\sfu|<|\sfv|$ $\Rightarrow$ $\sfu\bec\sfv$.
    \item[{\bf ii.}] If $|\sfu|=|\sfv|$ then compare $|\sfu|_R$ and $|\sfv|_R$: $|\sfu|_r < |\sfv|_r$ $\Rightarrow$ $\sfu\bec\sfv$.
    \item[{\bf iii.}] Let $|\sfu|=|\sfv|$, $|\sfu|_R=|\sfv|_R$. Compare $\sfu = \hat{s}_{i_1}\ldots \hat{s}_{i_N}$ and $\sfv = \hat{s}_{j_1}\ldots \hat{s}_{j_N}$ lexicographically from left to right: let $k=1\ldots N$ be the minimal number such that either $i_{k^\prime},j_{k^\prime}<r$ or $i_{k^\prime},j_{k^\prime}>r$ or $i_{k^\prime}=j_{k^\prime}=r$ for all $k^\prime<k$ and not for $k$. If either $i_k\leqslant r < j_k$ or $i_k< r \leqslant j_k$ then $\sfu\bec\sfv$.  
    \item[{\bf iv.}] Let $\sfu = \sfu_0 \hat{s}_r \sfu_1 \hat{s}_r\ldots \hat{s}_r \sfu_H$ and $\sfv = \sfv_0 \hat{s}_r \sfv_1 \hat{s}_r\ldots \hat{s}_r \sfv_H$ 
    and $\hat{s}_r\notin \sfu_\alpha,\sfv_\alpha$, $\alpha = 0,\ldots ,H$ ($H=0$ implies $\hat{s}_r\notin \sfu,\sfv$).  Compare $\pi_L(\sfu_{\alpha})\pi_R(\sfu_{\alpha}), \pi_L(\sfv_{\alpha})\pi_R(\sfv_{\alpha})\in\Snfree{r}{s}$ with respect to $\bbec$: let $\alpha_0 = 0\ldots H$ be the minimal number such that $\pi_L(\sfu_{\beta})\pi_R(\sfu_{\beta}), \pi_L(\sfv_{\beta})\pi_R(\sfv_{\beta})$ are incomparable with respect to $\bbec$ for all $\beta<\alpha_0$ but not for $\alpha_0$. Then the relation $\pi_L(\sfu_{\alpha_0})\pi_R(\sfu_{\alpha_0})\bbec \pi_L(\sfv_{\alpha_0})\pi_R(\sfv_{\alpha_0})$ implies $\sfu\bec \sfv$. 
    \item[{\bf v.}] In all other cases elements $\sfu,\sfv$ are incomparable with respect to $\bec$.
\end{itemize}

\subsection*{B: multiplication by generators}\label{Mul}
We write down the left multiplication of elements from $\mathfrak{B}_{r,s}$  by the generators $s_p\in\wB{r}{s}$. Let $[1,1-q_1]\dots[r-1,r-1-q_{r-1}]\in \Sn{r}{L}$ with $-1\leqslant q_1<1,\ldots,-1\leqslant q_{r-1}<r-1$. We will use a shorthand notation $[\dots][r-1,r-1-q]$ with $[\dots]=[1,1-q_1]\dots[r-2,r-2-q_{r-2}]$. For the products of the form $\w{r+i_1}{r-f+1 - j_{f}}\dots\w{r+f-1 + i_{f}}{r-j_{1}}$ we impose by default $s-f\geqslant i_{f}\geqslant \dots \geqslant i_{1} \geqslant 0$ and $r-f\geqslant j_{f}\geqslant \dots \geqslant j_1 \geqslant 0$, unless else is specified. For brevity we do not write out the well-known multiplication of the elements from $\Sn{r}{L}$ (respectively, $\Sn{s}{R}$) by the generator $s_i\in\Sn{r}{}$ (respectively, $s_i\in\Sn{s}{} $).
\paragraph{I.} $1\leqslant p<r$:
    \begin{equation*}
       s_p\cdot \Sn{r}{L}\mathfrak{D}_{r,s}^{(f)}\Sn{s}{R}=\left(s_p\cdot \Sn{r}{L}\right)\mathfrak{D}_{r,s}^{(f)}\Sn{s}{R} .
    \end{equation*}
\paragraph{II.}  $p=r$, $f>0$:
\paragraph{a)} $\Sn{r}{L}$ is represented by $[\ldots]$ and $i_1 = 0$:
    \begin{multline*}
       s_r\cdot [\dots]\w{r}{r-f+1 - j_{f}}\dots\w{r+f-1 + i_{f}}{r-j_{1}}\Sn{s}{R}\\
       =\delta [\dots]\w{r}{r-f+1 - j_{f}}\dots\w{r+f-1 + i_{f}}{r-j_{1}}\Sn{s}{R},
    \end{multline*}
\paragraph{b)} $i_1 = 0$ and $q\geqslant 0$:
    \begin{multline*}
       s_r\cdot [\dots] [r-1,r-1-q]\w{r}{r-f+1 - j_{f}}\dots\w{r+f-1 + i_{f}}{r-j_{1}}\Sn{s}{R}\\
       =\big([\dots]\cdot[r-2,r-1-q]\big)\w{r}{r-f+1 - j_{f}}\dots\w{r+f-1 + i_{f}}{r-j_{1}}\Sn{s}{R},
    \end{multline*}
\paragraph{c)} $s-f\geqslant i_{f}\geqslant \dots \geqslant i_{1} \geqslant 1$ and $r-f>q-f+1\geqslant j_{f}\geqslant \dots \geqslant j_1 \geqslant 0$:
\begin{multline*}
       s_r\cdot [\dots]\w{r-1}{r-1-q}\w{r+i_1}{r-f+1 - j_{f}}\dots\w{r+f-1 + i_{f}}{r-j_{1}}\Sn{s}{R}\\
       =[\dots]\w{r}{r-1 -q}\w{r+i_1}{r-f+1 - j_{f}}\dots\w{r+f-1 + i_{f}}{r-j_{1}}\Sn{s}{R},
    \end{multline*}
\paragraph{d)} $-1\leqslant q < j_f+f-1$ (put $i_0=1$) and $s-f\geqslant i_{f}\geqslant \dots \geqslant i_{1} \geqslant 1$:
\begin{multline*}
       s_r\cdot [\dots][r-1,r-1-q]\w{r+i_1}{r-f+1 - j_{f}}\dots\w{r+f-1 + i_{f}}{r-j_{1}}\Sn{s}{R}\\
       =[\dots]\w{r}{r-f+1 -j_f}\w{r+i_1}{r-f+2 - j_{f-1}}\dots\\
       \dots\w{r+q+i_{q+1}}{r-f+q+2-j_{f-q-1}}\w{r+q+2+i_{q+3}}{r-f+q+3-j_{f-q-2}}\dots\\
       \dots\w{r+f-1 + i_{f}}{r-j_{1}}\left(\w{r+f+i_{q+2}-1}{r+f+1}\cdot\Sn{s}{R}\right).
    \end{multline*}
\paragraph{III.} $p=r$, $f=0$ and $-1\leqslant q<r-1$:
\begin{equation*}
s_r\cdot [\dots][r-1,r-1-q]\Sn{s}{R}=[\dots]\w{r}{r-1-q}\Sn{s}{R}.
    \end{equation*}
\paragraph{IV.} $r<p\leqslant r+f+i_f$:
\paragraph{a)}  $p=r+k+i_k$, $i_{k+1}>i_{k}$:
\begin{multline*}
    s_p\cdot \Sn{r}{L}\dots\w{r+k-1+i_k}{r-f+k-j_{f-k+1}}\dots\Sn{s}{R}=\\
    \Sn{r}{L}\dots\w{r+k+i_k}{r-f+k-j_{f-k+1}}\dots\Sn{s}{R},
\end{multline*}
\paragraph{b)} $p=r+k+i_k$, $i_{k+1}=i_{k}$:
\begin{multline*}
    s_p\cdot \Sn{r}{L}\dots\w{r+k-1+i_k}{r-f+k-j_{f-k+1}}\dots\Sn{s}{R}=\\
    \left(\Sn{r}{L}\cdot s_{r-k}\right)\dots\w{r+k-1+i_k}{r-f+k-j_{f-k+1}}\dots\Sn{s}{R}.
\end{multline*}
\paragraph{V.} $p>r+f+i_f$:
\begin{equation*}
       s_p\cdot \Sn{r}{L}\mathfrak{D}_{r,s}^{(f)}\Sn{s}{R}=\Sn{r}{L}\mathfrak{D}_{r,s}^{(f)}\left(s_p\cdot \Sn{s}{R}\right).
    \end{equation*}

\end{document}